\newtheorem{theorem}{Theorem}[section]
\newtheorem{lemma}[theorem]{Lemma}
\theoremstyle{definition}
\newtheorem{definition}[theorem]{Definition}
\theoremstyle{remark}
\numberwithin{equation}{section}
\newcommand{\cH}{\mathcal{H}}
\newcommand{\cP}{\mathcal{P}}
\newcommand{\bP}{\mathbb{P}}
\newcommand{\C}{\mathbb{C}}
\newcommand{\Z}{\mathbb{Z}}
\newcommand{\R}{\mathbb{R}}
\newcommand{\ep}{\epsilon }
\newcommand{\si}{\sigma }
\newcommand{\ga}{\gamma }
\newcommand{\Ga}{\Gamma }
\newcommand{\Om}{\Omega }
\newcommand{\Var}{\operatorname{Var}}
\newcommand{\cReff}{\eff\mathcal{R}}
\newcommand{\eff}{\textrm{eff}}
\newcommand{\UST}{\operatorname{UST}}
\pgfplotsset{soldot/.style={color=black,only marks,mark=*}}
\newcommand{\bi}{\begin{itemize}}
\newcommand{\ei}{\end{itemize}}
\begin{document}

\begin{frontmatter}


 \title{The scaling limit of fair Peano paths\tnoteref{funding}}
 \tnotetext[funding]{The first and last authors acknowledge support from National Science Foundation under Grant  No.~2154032.
This work was partially completed while the second and third author were in residence at the Mathematical Sciences Research Institute in Berkeley, California, and the authors gratefully acknowledge the support provided by MSRI and 
the NSF under Grant No.~1440140.}



\author[label1]{Nathan Albin}
\author[label2]{Joan Lind}
\author[label1]{Pietro Poggi-Corradini}

 \affiliation[label1]{organization={Kansas State University},
             addressline={Department of Mathematics},
             city={Manhattan},
             postcode={66506},
             state={KS},
             country={USA}}

 \affiliation[label2]{organization={University of Tennessee},
             addressline={Department of Mathematics},
             city={Knoxville},
             postcode={37996},
             state={TN},
             country={USA}}

\begin{abstract}
We study random Peano paths on planar square grids that arise from fair random spanning trees. These are trees that are sampled in such a way as to have the same (if possible) edge probabilities.
In particular, we are interested in identifying the scaling limit as the mesh-size of the grid tends to zero. 
It is known \cite{lawler-schramm-werner2002} 
that if the trees are sampled uniformly, then the scaling limit exists and equals ${\rm SLE}_8$. We show that if we simply follow the same steps as in 
\cite{lawler-schramm-werner2002}, 
then fair Peano paths have a deterministic scaling limit.
\end{abstract}

\begin{keyword}

fair trees, uniform spanning trees, random Peano paths, Schramm-Loewner evolution



05C05, 	60K35, 90C35

\end{keyword}

\end{frontmatter}
%
%
%
%
%
%
%
%
%
%
%
%
%

\section{Introduction}
It was shown in \cite{lawler-schramm-werner2002} that the scaling limit of  random Peano paths arising from uniform spanning trees of planar grids exists and equals  ${\rm SLE}_8$, which is a random Schramm-Loewner evolution process. In this paper, we study random Peano paths which are generated by random spanning trees that are not necessarily uniform. In particular, we are interested in studying the limiting behavior of laws on spanning trees that arise in the context of spanning tree modulus. The latter trees are called fair trees, because rather than having the same probability of being sampled, they are sampled in such a way as to yield the same (if possible) edge probabilities.  In particular, we will be drawing on the two papers \cite{achpcst,alpc} that initiated the study of fair trees.

In order to state our main result, we must first set the stage.
Let $Q=(0,1] \times [0,1)$ be the unit square (including the bottom and right sides).  
For the lattice $\mathcal{L}_n =  \left[\frac{1}{n}\mathbb{Z} \times \frac{1}{n}\mathbb{Z}\right] \cap Q$,
consider the spanning trees that contain the edges along the bottom and right sides of $Q$.
Let $\Gamma_n$ be the set of fair trees, which are identified in Lemma \ref{lem:fairtree-modgrid} (after contracting the bottom and right sides). 
For each element $\gamma \in \Gamma_n$,
there is a unique spanning tree $\hat{\gamma}$ of the dual grid (i.e. $ \left[(\frac{1}{2n}+\frac{1}{n}\mathbb{Z}) \times (\frac{1}{2n}+ \frac{1}{n}\mathbb{Z})\right] \cap Q$) so that $\gamma \cap \hat{\gamma} = \emptyset$.
Further we can define a path $\zeta_\gamma$ in $Q$ from 0 to $1+i$ that winds between $\gamma$ and $\hat{\gamma}$, as illustrated in Figure \ref{TreeAndDual}.  
Except for near 0 and $1+i$, we take $\zeta_\gamma$ to be on the lattice $ (\frac{1}{4n} + \frac{1}{2n}\mathbb{Z}) \times (\frac{1}{4n} +\frac{1}{2n}\mathbb{Z})$.
We call $\zeta_\gamma$ the {\it Peano path} associated with $\gamma$, 
and we define $\mathcal{A}_n = \{ \zeta_\gamma \, : \, \gamma \in \Gamma_n \}$
to be the collection of fair Peano paths.

 \begin{figure}
 \centering
\begin{tikzpicture}[scale=0.9,, auto, node distance=3cm,  thin]
   \begin{scope}[every node/.style={circle,draw=black,fill=black!100!,font=\sffamily\Large\bfseries}]
    \node (v0) [scale=0.2] at (0,0) {};
   \node (v1)[scale=0.2] at (1,0) {};
   \node (v2) [scale=0.2]at (2,0) {};
    \node (v3)[scale=0.2] at (3,0) {};
    \node (v4)[scale=0.2] at (4,0) {};
    \node (v5)[scale=0.2] at (5,0) {};
   \node (w1)[scale=0.2] at (1,1) {};
   \node (w2) [scale=0.2]at (2,1) {};
    \node (w3)[scale=0.2] at (3,1) {};
    \node (w4)[scale=0.2] at (4,1) {};
    \node (w5)[scale=0.2] at (5,1) {};
   \node (x1)[scale=0.2] at (1,2) {};
   \node (x2) [scale=0.2]at (2,2) {};
    \node (x3)[scale=0.2] at (3,2) {};
    \node (x4)[scale=0.2] at (4,2) {};
    \node (x5)[scale=0.2] at (5,2) {};
   \node (y1)[scale=0.2] at (1,3) {};
   \node (y2) [scale=0.2]at (2,3) {};
    \node (y3)[scale=0.2] at (3,3) {};
    \node (y4)[scale=0.2] at (4,3) {};
    \node (y5)[scale=0.2] at (5,3) {};
   \node (z1)[scale=0.2] at (1,4) {};
   \node (z2) [scale=0.2]at (2,4) {};
    \node (z3)[scale=0.2] at (3,4) {};
    \node (z4)[scale=0.2] at (4,4) {};
    \node (z5)[scale=0.2] at (5,4) {};
    \node (vf)[scale=0.2] at (5,5) {};
       \end{scope}
   \begin{scope}[every edge/.style={draw=black,thin}]
    \draw  (v0) edge node{} (v5);
    \draw  (v5) edge node{} (vf);
     \draw  (v1) edge node{} (w1);
     \draw  (v2) edge node{} (x2);
    \draw  (v4) edge node{} (w4);
     \draw  (w3) edge node{} (w4);
   \draw  (x1) edge node{} (x2);
    \draw  (x3) edge node{} (x5);
    \draw  (x3) edge node{} (z3);
     \draw  (y1) edge node{} (y3);
     \draw  (y2) edge node{} (z2);
     \draw  (z1) edge node{} (z2);
     \draw  (x4) edge node{} (y4);
     \draw  (z4) edge node{} (z5);
            \end{scope}
   \begin{scope}[every node/.style={circle,draw=gray,fill=gray!100!,font=\sffamily\Large\bfseries}]
   \node (a1)[scale=0.2] at (0.5,0.5) {};
   \node (a2) [scale=0.2]at (1.5,0.5) {};
    \node (a3)[scale=0.2] at (2.5,0.5) {};
    \node (a4)[scale=0.2] at (3.5,0.5) {};
    \node (a5)[scale=0.2] at (4.5,0.5) {};
   \node (b1)[scale=0.2] at (0.5,1.5) {};
   \node (b2) [scale=0.2]at (1.5,1.5) {};
    \node (b3)[scale=0.2] at (2.5,1.5) {};
    \node (b4)[scale=0.2] at (3.5,1.5) {};
    \node (b5)[scale=0.2] at (4.5,1.5) {};
   \node (c1)[scale=0.2] at (0.5,2.5) {};
   \node (c2) [scale=0.2]at (1.5,2.5) {};
    \node (c3)[scale=0.2] at (2.5,2.5) {};
    \node (c4)[scale=0.2] at (3.5,2.5) {};
    \node (c5)[scale=0.2] at (4.5,2.5) {};
   \node (d1)[scale=0.2] at (0.5,3.5) {};
   \node (d2) [scale=0.2]at (1.5,3.5) {};
    \node (d3)[scale=0.2] at (2.5,3.5) {};
    \node (d4)[scale=0.2] at (3.5,3.5) {};
    \node (d5)[scale=0.2] at (4.5,3.5) {};
   \node (e1)[scale=0.2] at (0.5,4.5) {};
   \node (e2) [scale=0.2]at (1.5,4.5) {};
    \node (e3)[scale=0.2] at (2.5,4.5) {};
    \node (e4)[scale=0.2] at (3.5,4.5) {};
    \node (e5)[scale=0.2] at (4.5,4.5) {};
       \end{scope}
 \begin{scope}[every edge/.style={draw=gray,thin}]
    \draw  (a1) edge node{} (e1);
    \draw  (e1) edge node{} (e5);
     \draw  (a2) edge node{} (b2);
     \draw  (a3) edge node{} (c3);
    \draw  (a3) edge node{} (a4);
   \draw  (a5) edge node{} (b5);
    \draw  (b1) edge node{} (b2);
    \draw  (b3) edge node{} (b5);
     \draw  (c1) edge node{} (c3);
     \draw  (c4) edge node{} (e4);
     \draw  (c5) edge node{} (d5);
     \draw  (d1) edge node{} (d2);
     \draw  (d3) edge node{} (e3);
     \draw  (d4) edge node{} (d5);
            \end{scope}
 \begin{scope}[{draw=red, thick}]            
       \draw[-] (0,0) to (0.25,0.25);   
       \draw[-] (0.25,0.25) to (0.75, 0.25); 
       \draw[-] (0.75, 0.25) to (0.75, 1.25); 
        \draw[-] (0.75, 1.25) to (1.25, 1.25); 
        \draw[-] (1.25, 1.25) to (1.25, 0.25); 
         \draw[-] (1.25, 0.25) to (1.75, 0.25); 
          \draw[-] (1.75, 0.25) to (1.75, 1.75); 
           \draw[-] (1.75, 1.75) to (0.75, 1.75); 
            \draw[-] (0.75, 1.75) to (0.75, 2.25); 
             \draw[-] (0.75, 2.25) to (2.25, 2.25); 
             \draw[-] (2.25, 2.25) to (2.25, 0.25); 
             \draw[-] (2.25, 0.25) to (3.75, 0.25); 
             \draw[-] (3.75, 0.25) to (3.75, 0.75); 
             \draw[-] (3.75, 0.75) to (2.75, 0.75); 
             \draw[-] (2.75, 0.75) to (2.75, 1.25); 
             \draw[-] (2.75, 1.25) to (4.25, 1.25); 
             \draw[-] (4.25, 1.25) to (4.25, 0.25); 
             \draw[-] (4.25, 0.25) to (4.75, 0.25);
             \draw[-] (4.75, 0.25) to (4.75, 1.75); 
             \draw[-] (4.75, 1.75) to (2.75, 1.75);
             \draw[-] (2.75, 1.75) to (2.75, 2.75);
             \draw[-] (2.75, 2.75) to (0.75, 2.75);
             \draw[-] (0.75, 2.75) to (0.75, 3.25);
             \draw[-] (0.75, 3.25) to (1.75, 3.25);
             \draw[-] (1.75, 3.25) to (1.75, 3.75);
             \draw[-] (1.75, 3.75) to (.75, 3.75);
             \draw[-] (.75, 3.75) to (.75, 4.25);
             \draw[-] (.75, 4.25) to (2.25, 4.25);
             \draw[-] (2.25, 4.25) to (2.25,3.25);
             \draw[-] (2.25, 3.25) to (2.75, 3.25);
             \draw[-] (2.75, 3.25) to (2.75, 4.25);
             \draw[-] (2.75, 4.25) to (3.25, 4.25);
             \draw[-] (3.25, 4.25) to (3.25, 2.25);
             \draw[-] (3.25, 2.25) to (3.75, 2.25);
             \draw[-] (3.75, 2.25) to (3.75, 3.25);
             \draw[-] (3.75, 3.25) to (4.25, 3.25);
             \draw[-] (4.25, 3.25) to (4.25, 2.25);
             \draw[-] (4.25, 2.25) to (4.75, 2.25);
             \draw[-] (4.75, 2.25) to (4.75, 3.75);
             \draw[-] (4.75, 3.75) to (3.75, 3.75);
             \draw[-] (3.75, 3.75) to (3.75, 4.25);
             \draw[-] (3.75, 4.25) to (4.75, 4.25);
             \draw[-] (4.75, 4.25) to (4.75, 4.75);
             \draw[-] (4.75, 4.75) to (5, 5);
       \end{scope}  
\end{tikzpicture}
\caption{An example of spanning trees $\gamma$ (in black) and $\hat{\gamma}$ (in gray) and the corresponding Peano path $\zeta_\gamma$ (in red) for $n=5$. }\label{TreeAndDual}
\end{figure}
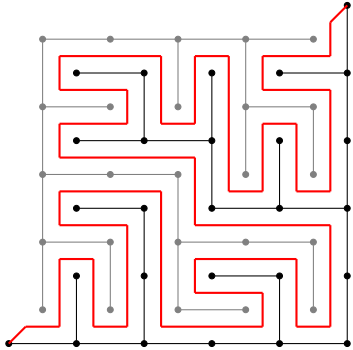

We consider an element $\zeta$ of $\mathcal{A}_n$ to be a family of the growing traces $\zeta[0,t]$ for $t\in [0,1]$ modulo reparametrization.  For $\zeta, \xi \in \cup \mathcal{A}_n$, define the distance 
\begin{equation}\label{distance}
d(\zeta, \xi) = \inf \sup_{t \in [0,1]} d_H\left( \zeta[0,t], \xi[0,t]\right),
\end{equation}
where the infimum is over all possible parametrizations of $\zeta, \xi,$ and $d_H$ is the Hausdorff distance.
Let $X= \overline {\cup \mathcal{A}_n}$ be the closure of $\cup \mathcal{A}_n$ with respect to the distance $d$.
Also, let $\cP(X)$ be the set of probability measures on $(X, \mathcal{B}(X))$, 
where $\mathcal{B}(X)$ is the Borel $\sigma$-algebra induced by the metric $d$ on $X$.

Note that elements of $X$ are growing families of subsets of $\overline{Q}$, modulo reparametrization (i.e. elements have the form $\kappa = \{\kappa_t\}_{t \in [0,1]}$ with $\kappa_s \subseteq \kappa_t$ for $s \leq t$).
Further, $X$ contains elements that are no longer traces of curves.  
For instance, 
define the growing family of triangles
\begin{equation}\label{lambda}
\lambda_t := \{ x+iy \in \overline{Q} \, : \, x+y \leq 2t \}.
\end{equation}
We will see in Section \ref{scalinglimit} that $\lambda = \{\lambda_t\}_{t\in[0,1]}$ is an element of $X$. Moreover, the following is our main result. 
\begin{theorem} \label{weakconvergence}
Let $\mu_n \in \cP(X)$ be the probability measure that is uniform on $\mathcal{A}_n$.
Then  $\mu_n$ converges weakly to the Dirac measure $\delta_\lambda$ as $n \to \infty$, where $\lambda   \in X$ is defined by \eqref{lambda}.
\end{theorem}

Note in particular that we obtain a deterministic scaling limit.  
There are other possible choices for fair probability measures.  However, we choose  to study $\mu_n$ since it has the largest support and the largest entropy out of all these possibilities.  
In this setting, one can relate the fair trees  to a system of coalescing random walks under isotropic scaling.  
(This is in contrast to the diffusive scaling that would lead to the Brownian web, which originated with Arratia \cite{arratia79}.  See the survey paper \cite{schertzer_sun_swart_2016} for more on the Brownian web and related objects.)

In Section \ref{background}, we summarize the background regarding fair trees that is relevant for our purposes. 
In Section \ref{scalinglimit}, we prove Theorem \ref{weakconvergence} in two steps: we first show the existence of subsequential limits of $\mu_n$, using Prohorov's Theorem, and then we show that the only possible subsequential limit is $\delta_\lambda$.
Finally in Section \ref{sec:fairtrees-grids} we identify the fair trees in the case of both the modified and standard grids, and we give  evidence supporting an alternate approach.


\section{Background on fair trees and modified grids}\label{background}
\subsection{Random spanning trees}
Consider a connected graph $G=(V,E)$, with vertex-set $V$ and edge-set $E$ that are both finite. We allow multi-edges, but not self-loops. Let $\Ga_G$ be the set of all spanning trees of $G$. The set of all probability mass functions (pmf's) $\mu$ on $\Ga_G$ is denoted with $\cP(\Ga_G)$. A pmf $\mu\in\cP(\Ga_G)$ determines  random trees $\underline{\ga}$ that satisfy the law:
\[
\bP_\mu\left(\underline{\ga}=\ga\right)=\mu(\ga)\qquad\forall \ga\in\Ga_G.
\]
The {\it $\si$-weighted uniform spanning trees}, write  $\UST_\si$ for short, are random spanning trees obtained by considering some fixed edge-weights $\si\in\R_{>0}^E$. 
These are random trees $\underline{\ga}\in\Ga_G$ whose probability $\mu_\si(\ga)$ is proportional to  the product:
\[
\prod_{e\in \ga} \si(e).
\]
If the edge weights $\si$ are constant, then we recover uniform spanning trees and simply write $\UST$.
It is well known, see \cite[Chapter 4]{LP:book}, that $\UST_\si$ can be sampled using weighted random walks as in the Aldous-Broder algorithm, or  loop-erased weighted random walks as in Wilson's algorithm. Moreover, a celebrated result of Kirchhoff states that the edge-probabilities for $\UST_\si$ are related to the effective resistance $\cReff_\si$ for the associated electrical network with edge-conductances given by $\si$. More precisely, if $e\in E$, then
\begin{equation}\label{eq:peredgeeffres}
\bP_{\mu_\si}\left(e\in \underline{\ga}\right) = \si(e)\cReff_\si(e).
\end{equation}
\subsection{Fair edge usage}
As we have seen, the edge-probabilities of UST coincide with the effective resistance of each edge and can therefore vary from edge to edge. Instead of requiring that each spanning tree be equally likely as in the case of UST, we ask for laws that sample spanning trees so as to equalize the edge-probabilities as much as possible. Ideally, we would like all the edge-probabilities to be the same, but this is not always possible. When this happens we say the graph is {\it homogenous}. More precisely, the graph $G$ is said to be homogeneous, if it admits a pmf $\mu^*\in\cP(\Ga_G)$ with constant edge probabilities, in which case  by \cite[Corollary 4.5]{achpcst}, the edge-probabilities are uniquely determined as
\begin{equation}\label{eq:edge-prob}
\bP_{\mu^*}\left(e\in\underline{\ga}\right)\equiv\frac{|V|-1}{|E|},\qquad\forall e\in E.
\end{equation}
In general, the Fairest Edge Usage  (FEU) problem, introduced in \cite{achpcst}, tries to minimize the variance of the edge-probabilities:
\begin{equation}\label{prob:feu}
\begin{array}{lll}
\underset{\mu\in\cP(\Ga_G)}{\text{minimize}} && \Var (\eta) \\
&&\\
\text{subject to} &&  \eta(e)=\bP_\mu\left(e\in\underline{\ga}\right), \quad \forall e\in E.
\end{array}
\end{equation}
A pmf $\mu$ that is optimal for the FEU problem (\ref{prob:feu}) is called  {\it fair}, and a spanning tree $\gamma$ is  {\it fair} if it is in the support of some fair pmf.
In \cite{achpcst} it was shown that fair pmf's for FEU always exist, but they are in general not unique. On the other hand, the edge probabilities $\eta^*(e):=\bP_{\mu^*}\left(e\in\underline{\ga}\right)$ corresponding to a fair pmf $\mu^*$, are uniquely determined. In fact, the optimal $\eta^*$ in (\ref{prob:feu}) can be computed using the spanning tree modulus algorithms and is related to the family of feasible partitions via Fulkerson duality. More details about these connections can be found in \cite{achpcst}.

We write $\Ga_G^f$ for the set of all fair trees. Trees in $\Ga_G\setminus\Ga_G^f$ are called {\it forbidden} trees. In particular, if a $\UST_\si$ pmf $\mu_\sigma$ is fair, then $G$ does not have any forbidden trees, because $\mu_\sigma$ is supported on all of $\Gamma_G$ by definition.
\begin{definition}\label{def:allfair}
A graph $H$ has the  {\it all fair} property if every spanning tree of $H$ is fair: $\Ga_H=\Ga_H^f$.
\end{definition}

\subsection{Planar modified grids}\label{sec:modified-grid}

The scaling limit for UST was established in\cite{lawler-schramm-werner2002}.  The idea there is to consider a modified grid, see Figure \ref{fig:modified-grid}, so that each spanning tree gives rise to a normalized Peano path. The authors in \cite{lawler-schramm-werner2002}  show that as the mesh size of the grid goes to zero, these random Peano paths converge to a known stochastic process called $SLE_8$.

A {\it standard $m$-by-$n$ planar grid} is a squared-grid with vertex set
$ \left(\mathbb{Z} \times \mathbb{Z}\right) \cap \left([0,  m] \times [0,  n] \right).$
A modified grid $G$ consists of a standard $m$-by-$n$ grid $G_0$, together with an extra node $v_0$ connected via an edge to each node on the bottom and the right hand-side of $G_0$. In particular, the bottom-right corner of $G_0$ is connected by two edges to $v_0$. Alternatively, we can think of $G$ as the graph obtained from an $(m+1)$-by-$(n+1)$ grid by identifying every node on the bottom and right hand-sides and removing all the resulting self-loops.

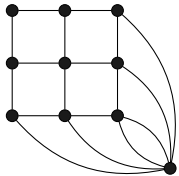
\begin{figure}[h!]
\centering
\begin{tikzpicture}[scale=0.7]
\begin{scope}[every node/.style={circle,draw=black,thin,fill=black!90! ,}]
 \node  (v1)[scale=0.4] at (-2,-2){};
    \node (v2)[scale=0.4] at (-1,-2){};
    \node (v3) [scale=0.4] at (0,-2){};
    \node (v4) [scale=0.4] at (-2,-1){};
    \node (v5) [scale=0.4] at (-1,-1){};
   \node (v6) [scale=0.4] at (0,-1){};
 \node (v7) [scale=0.4] at (-2,0) {};
   \node (v8) [scale=0.4] at (-1,0){};
   \node (v9) [scale=0.4] at (0,0) {} ;
   \node (v10)[scale=0.4] at (1,-3){};
  \end{scope}
 
\begin{scope}[every edge/.style={draw=black, thin}]
    \draw  (v1) edge node{} (v2);
    \draw  (v1) edge node{} (v4);

    \draw (v2) edge node{} (v3);
      \draw (v2) edge node{} (v5);

     \draw (v3) edge node{} (v6);

   \draw (v6) edge node{} (v5);

    \draw (v5) edge node{} (v4);
      
      \draw (v4) edge node{} (v7);

      \draw (v7) edge node{} (v8);  
      \draw (v8) edge node{} (v9);
  
\draw (v9) edge node{}(v6);
\draw (v8) edge node{} (v5);

\path (v1) edge [bend right] node{}(v10);
  \path (v2)  edge [bend right] node{}(v10);
  
  \path (v3)  edge [bend right] node{}(v10);

  \path (v3)  edge [bend left] node{}(v10);
  \path (v6) edge[bend left] node{} (v10);
  \path (v9) edge [bend left] node{} (v10);
 \end{scope}
\end{tikzpicture}  
\caption{Modified Grids}
\label{fig:modified-grid}
 \end{figure}
We associate to every vertex $v \in V(G_0)=V(G)\setminus\{v_0\}$ in the grid  an edge set $E_v\subset E(G)$, consisting of the edge connecting $v$ to its neighbor on the right and the one below $v$. Note that if $v$ belongs to the bottom or the right-side edge of the grid $G_0$, then the neighbor to the right, or the one below $v$, may have to be the special node $v_0$. Then, the family of sets $\{E_v\}_{v\in G_0}$ forms a partition of $E$:
\[
E(G)=\bigcup_{v\in V(G)\setminus\{v_0\}}E_v\qquad\text{and}\qquad E_v\cap E_{v'}=\emptyset,\quad\text{for $v\neq v'$}.
\]
 Consider the collection of all subgraphs of $G$ that use exactly one edge from each $E_v$:
\begin{equation}\label{eq:forbidden-turns}
\bar{\Gamma}:= \{T \subset E:\ |T \cap E_v|=1, \quad \text{for all $v \in V(G_0)$} \}.
\end{equation}
For instance, the tree on the left in Figure \ref{fig:grid-trees} is in $\bar{\Ga}$, while the tree on the right is not, because the latter tree connects the  central node of the grid to both its neighbor on the right and its neighbor below.
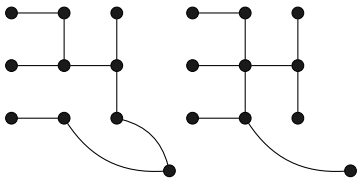
\begin{figure}[h!]
\centering
\begin{tikzpicture}[scale=0.7]
 \begin{scope}[ every node/.style={circle,draw=black,fill=black!90!,font=\sffamily\small\bfseries}]
    \node (v1) [scale=0.4]at (-2,-2) {};
    \node (v2) [scale=0.4] at (-1,-2) {};
    \node (v3) [scale=0.4] at (0,-2) {} ;
    \node (v4) [scale=0.4] at (-2,-1){};
   \node (v5)  [scale=0.4] at (-1,-1) {} ;
   \node (v6) [scale=0.4] at (0,-1){};
   \node (v7) [scale=0.4] at (-2,0) {};
   \node (v8) [scale=0.4]at (-1,0) {};
   \node (v9) [scale=0.4] at (0,0) {};
   \node (v10) [scale=0.4] at (1,-3) {};

   \end{scope}
   \begin{scope}[every edge/.style={draw=black, thin}]
   \draw (v1) edge node{} (v2);
   \path (v2) edge[bend right] (v10);
   \path (v3) edge[bend left] (v10);
   \draw (v4) edge node{} (v5);
   \draw (v6) edge node{} (v5);
   \draw (v3) edge node{} (v6);
   \draw (v6) edge node{} (v9);
   \draw (v7) edge node{} (v8);
   \draw (v8) edge node{} (v5);
   \end{scope}
   \end{tikzpicture}
\begin{tikzpicture}[scale=0.7]
\begin{scope}[ every node/.style={circle,draw=black,fill=black!90!,font=\sffamily\small\bfseries}]
    \node (v1) [scale=0.4]at (-2,-2) {};
    \node (v2) [scale=0.4] at (-1,-2) {};
    \node (v3) [scale=0.4] at (0,-2) {} ;
    \node (v4) [scale=0.4] at (-2,-1){};
  \node (v5) [scale=0.4] at (-1,-1) {} ;
   \node (v6)[scale=0.4] at (0,-1){};
 \node (v7)[scale=0.4] at (-2,0) {};
   \node (v8) [scale=0.4]at (-1,0) {};
   \node (v9)[scale=0.4] at (0,0) {};
   \node (v10) [scale=0.4] at (1,-3) {};

   \end{scope}
   \begin{scope}[every edge/.style={draw=black, thin}]
   \draw (v1) edge node{} (v2);
   \path (v2) edge[bend right] (v10);
   \draw (v2) edge node{} (v5);
   \draw (v4) edge node{} (v5);
   \draw (v6) edge node{} (v5);
   \draw (v3) edge node{} (v6);
   \draw (v6) edge node{} (v9);
   \draw (v7) edge node{} (v8);
   \draw (v8) edge node{} (v5);
   \end{scope}
   \end{tikzpicture}
  
\caption{Fair tree on the left and Forbidden tree on the right}
 \label{fig:grid-trees}
 \end{figure}

Suppose we are given a set $T\in\bar{\Ga}$. Then, starting from any node $v\in V(G_0)$ one can follow an edge of $T$ and either move to the right or down and eventually reach $v_0$. Thus, every $T \in \bar{\Gamma}$ is connected and spans $V(G)$.
Moreover, any such $T$ satisfies
\begin{equation} \label{eq:edgecount}
|E(T)|=\sum_{v \in V(G_0)} |E_{v} \cap T|=\sum_{v \in V(G_0)} 1= |V(T)|-1.
\end{equation} 
Therefore, every such $T$ is a spanning tree of $G$, namely $\bar{\Gamma} \subset \Gamma_G$

Let $\bar{\mu}$ be the uniform pmf conditioned  on $\bar{\Gamma}$:
\[ \bar{\mu}(T):= \begin{cases} 
      \frac{1}{\left|\bar{\Gamma}\right|} &  \text{if $T \in \bar{\Gamma}$} \\
      0 & \text{otherwise}.\\
      
   \end{cases}
\]
A random tree in $\bar{\Gamma}$ with law $\bar{\mu}$ can be sampled by tossing independent  fair coins, one for each node  $v\in V(G_0)$, and picking either the edge to the right of $v$ or below $v$. In particular, notice that every tree $T\in \bar{\Gamma}$ has a  {\it partner tree}, i.e., the tree that can be obtained by substituting  every edge in $T\cap E_v$ with the other edge in $E_v\setminus T$.

\begin{lemma}\label{lem:fairtree-modgrid}
The family $\bar{\Ga}$ is the family of all fair trees for the modified grid, i.e., 
$\Gamma^f_G=\bar{\Gamma}$. Also,  $\bar{\mu}$  is optimal for the $\text{FEU}$ problem on $G$.

Moreover, $\bar{\Ga}$ consists exactly of all the trees $\ga\in\Gamma_G$ that have the {\it partner tree property}, i.e., the trees $\ga\in\Ga_G$ such that the complement $E\setminus \ga$ is also a tree.
\end{lemma}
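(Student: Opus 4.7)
My plan is to establish the three assertions of the lemma in the order (i) optimality of $\bar{\mu}$, then (iii) the partner-tree characterization, and finally (ii) $\Gamma_G^f = \bar{\Gamma}$---since (ii) can be deduced cleanly from (i) and (iii). For (i) I would compute edge probabilities directly: the independent-fair-coin description immediately yields $\bP_{\bar{\mu}}(e\in\underline{\gamma})=1/2$ for every $e\in E_v$ and every $v\in V(G_0)$. Combined with the partition identity $|E|=\sum_v|E_v|=2|V(G_0)|=2(|V|-1)$, this gives the constant edge probability $(|V|-1)/|E|=1/2$, matching the homogeneity formula \eqref{eq:edge-prob}. Hence $G$ is homogeneous, $\Var(\eta)=0$ under $\bar{\mu}$, and $\bar{\mu}$ is optimal for FEU with $\bar{\Gamma}=\supp(\bar{\mu})\subseteq\Gamma_G^f$.

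Next, for (iii), the forward implication is immediate: $\gamma\in\bar{\Gamma}$ gives $|E_v\cap(E\setminus\gamma)|=2-1=1$ for all $v$, so $E\setminus\gamma\in\bar{\Gamma}\subseteq\Gamma_G$. For the converse, suppose $\gamma$ and $E\setminus\gamma$ are both spanning trees. The identity $\sum_v|E_v\cap\gamma|=|\gamma|=|V|-1=|V(G_0)|$ combined with $|E_v|=2$ only forces the average of $|E_v\cap\gamma|$ to be $1$; to promote this to a pointwise statement I would exploit the planar self-duality of the modified grid. Under the natural bijection $v_0\mapsto\omega$ (outer face) and $v\in V(G_0)\mapsto F_v$, where $F_v$ is the bounded face of $G$ whose boundary contains both edges of $E_v$, planar complementation sends spanning trees of $G$ to spanning trees of $G^*\cong G$, and matching the partition $\{E_v\}_v$ with the dual face-partition shows that $|E_v\cap\gamma|=2$ at any $v$ would either isolate $F_v$'s dual vertex or create a cycle in $E\setminus\gamma$, contradicting $E\setminus\gamma\in\Gamma_G$.

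Finally, for (ii), the inclusion $\bar{\Gamma}\subseteq\Gamma_G^f$ comes from (i). For the reverse, let $\mu^*$ be any fair pmf; by Corollary 4.5 of \cite{achpcst}, $\bP_{\mu^*}(e\in\underline{\gamma})=1/2$ for every $e$. If $\mu^*(\gamma_0)>0$ for some $\gamma_0\notin\bar{\Gamma}$, then $|E_v\cap\gamma_0|=2$ at some $v$ and, by (iii), $E\setminus\gamma_0\notin\Gamma_G$. I would then derive a contradiction by forming the convex combination $\nu=(\bar{\mu}+\mu^*)/2$ and exploiting the partner involution $\gamma\mapsto E\setminus\gamma$ on $\bar{\Gamma}$---under which $\bar{\mu}$ is symmetric---to show that the $\gamma_0$-contribution cannot be balanced without breaking the constant-edge-probability constraint. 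The main obstacle throughout is the reverse direction of (iii): the averaged identity alone does not force $|E_v\cap\gamma|=1$ pointwise, and the argument must exploit the global planar (or matroid) structure; the cleanest route is through planar self-duality of $G$, with the identification of the partition $\{E_v\}$ as dual to the face partition of $G^*$ being the key combinatorial input.
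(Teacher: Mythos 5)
Your first step---homogeneity of $G$ and optimality of $\bar{\mu}$, hence $\bar{\Gamma}\subseteq\Gamma_G^f$---is correct and is exactly the paper's opening move, as is the forward direction of the partner-tree characterization. The gap is in both of the hard reverse directions, each of which rests on a mechanism that does not work as described. For the partner-tree converse, your ``key combinatorial input''---that the partition $\{E_v\}$ matches the face partition of the dual---is false: the bounded face $F_v$ lying below and to the right of $v$ does contain the two edges of $E_v$ on its boundary, but its other two boundary edges belong to $E_{v'}$ for two \emph{other} vertices $v'$. So $|E_v\cap\gamma|=2$ neither isolates the dual vertex of $F_v$ (it retains two incident dual edges) nor visibly creates a cycle in $E\setminus\gamma$. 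For $\Gamma_G^f\subseteq\bar{\Gamma}$, the convex combination $\nu=(\bar{\mu}+\mu^*)/2$ of two pmfs with constant edge probability $1/2$ again has constant edge probability $1/2$; there is no constraint left to violate. The only local information at a generic $v$ is $\bE_{\mu^*}\lvert E_v\cap\underline{\gamma}\rvert=1$, which is perfectly consistent with $|E_v\cap\gamma_0|=2$ for one tree in the support compensated by another tree with $|E_v\cap\cdot|=0$. Neither sketch can be completed as written.

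The paper closes both gaps with one device you do not use: deflation. The pair $E_v$ at the bottom-right corner of $G_0$ is literally a digon (two parallel edges to $v_0$), so acyclicity forces $|T\cap E_v|\le 1$ there and the restriction property forces equality; contracting that digon merges $v$ into $v_0$ and turns the next $E_{v'}$ into a digon of a quotient graph that is again homogeneous of $1$-density $2$, and iterating right-to-left, row by row, exhibits \emph{every} $E_v$ as a digon of some intermediate minimal core. A tree $T\notin\bar{\Gamma}$ has some $E_{v_*}$ with $|T\cap E_{v_*}|=0$; at the stage where $E_{v_*}$ is a digon, $T$ fails to restrict to a spanning tree of that core, and the restriction property of minimal cores (Theorem~\ref{thm:mincore}) then says $T$ is forbidden. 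The same contraction scheme yields the partner-tree converse: if $\gamma$ and $E\setminus\gamma$ are both spanning trees, then at each digon stage each must contain exactly one of the two parallel edges, which is precisely $|\gamma\cap E_v|=1$ for all $v$. If you want to avoid the FEU machinery, this iterated digon-contraction is the elementary argument to aim for; planar self-duality alone, without the contraction, does not localize the averaged constraint to individual pairs $E_v$.
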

For the moment, we will assume Lemma \ref{lem:fairtree-modgrid}, and defer its proof to Section \ref{sec:modgrids}.

\section{The scaling limit of fair Peano paths} \label{scalinglimit}

We are interested in the behavior of fair Peano paths as $n \to \infty$.  
Since any fair Peano path $\zeta_n \in \mathcal{A}_n$ passes within distance $\frac{1}{4n}$ of every vertex in the lattice $\mathcal{L}_n$, the  limit of the full trace $\zeta_n[0,1]$ will converge to $\overline{Q}$.
However, we are interested in a more detailed understanding.  In particular, is the limit a spacefilling curve or not, and is the limit a random or deterministic object?
Theorem \ref{weakconvergence}  answers both of these questions, showing that the limit is not a spacefilling curve and is deterministic.

\begin{figure}
 \centering
\includegraphics[scale=0.3]{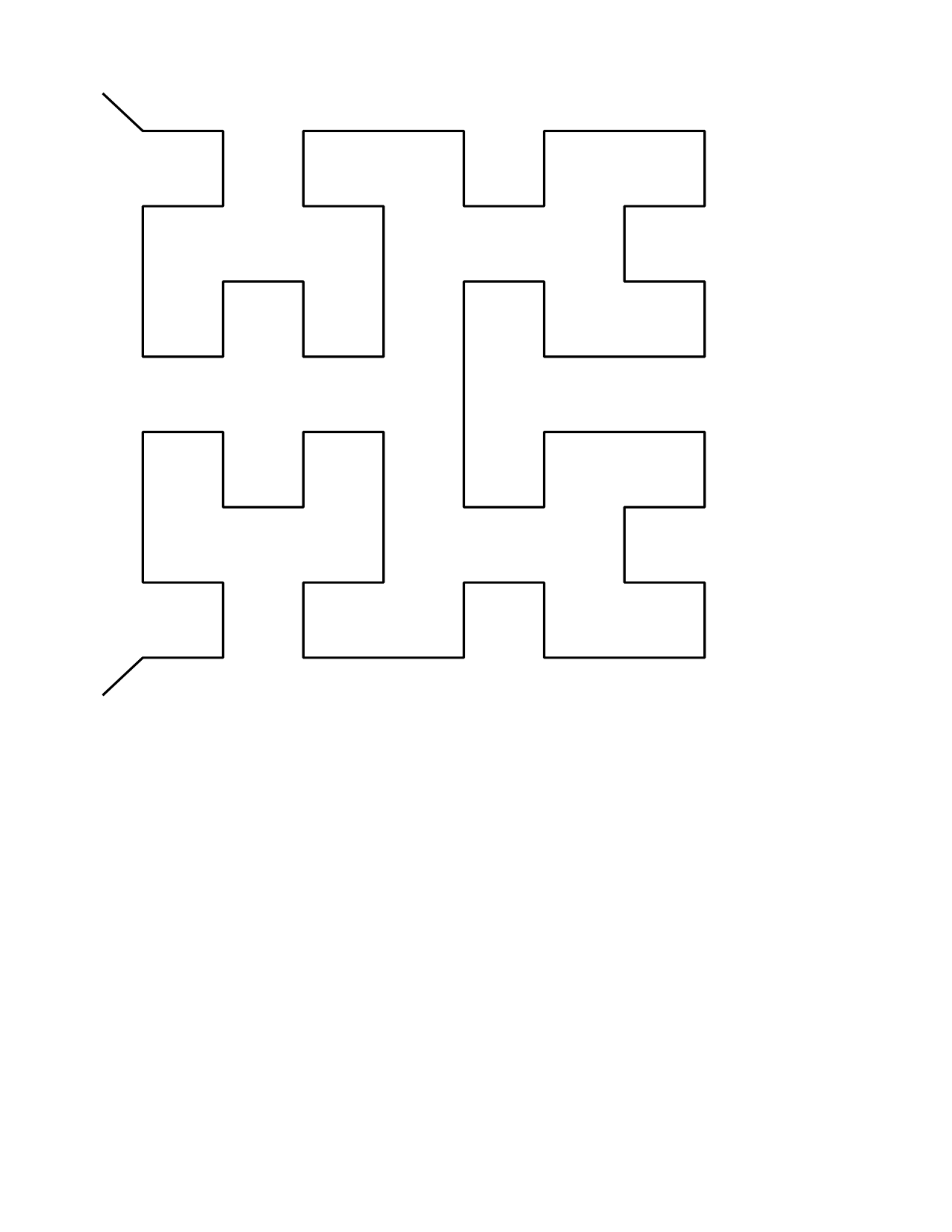}  \hspace{0.5in}
\includegraphics[scale=0.3]{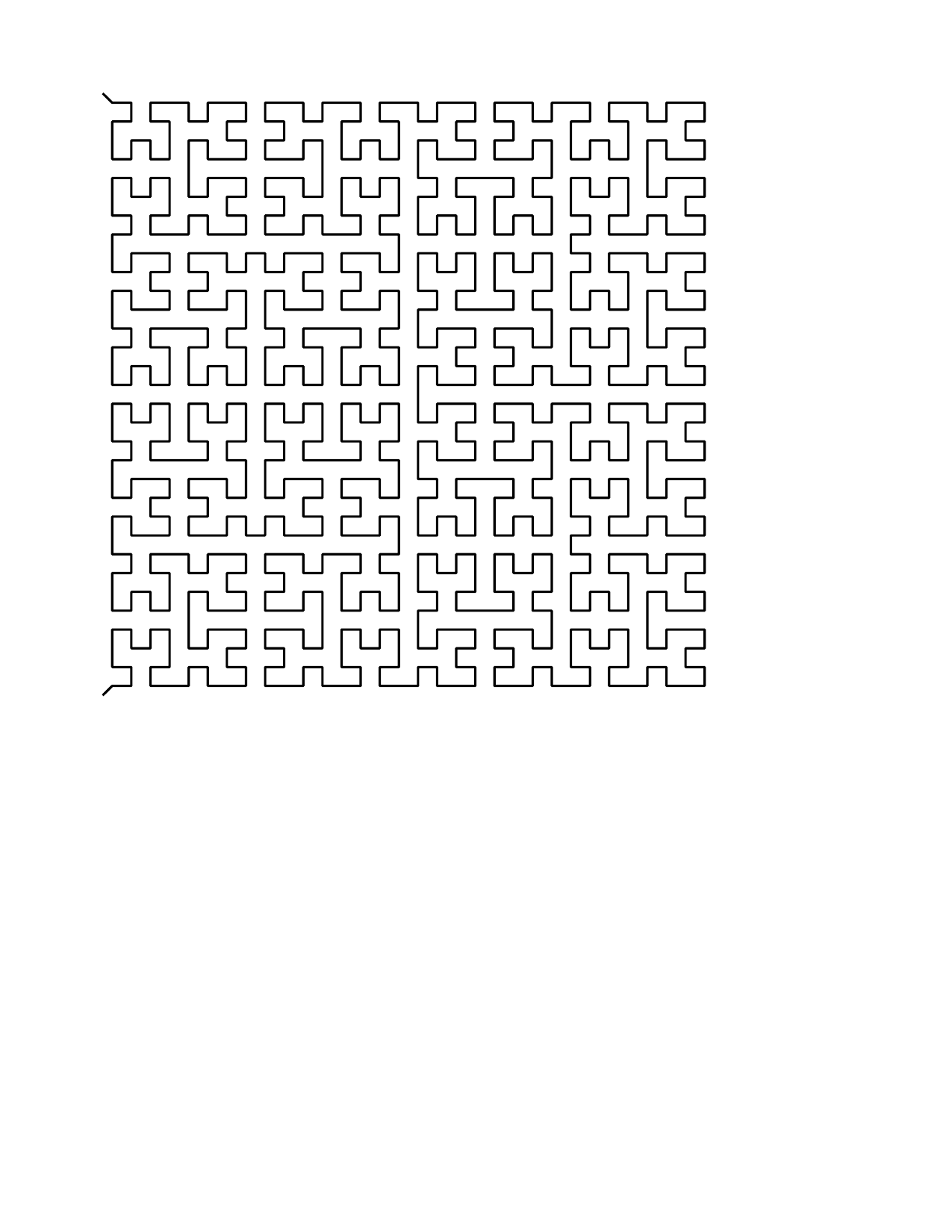}  \hspace{0.5in}
\includegraphics[scale=0.3]{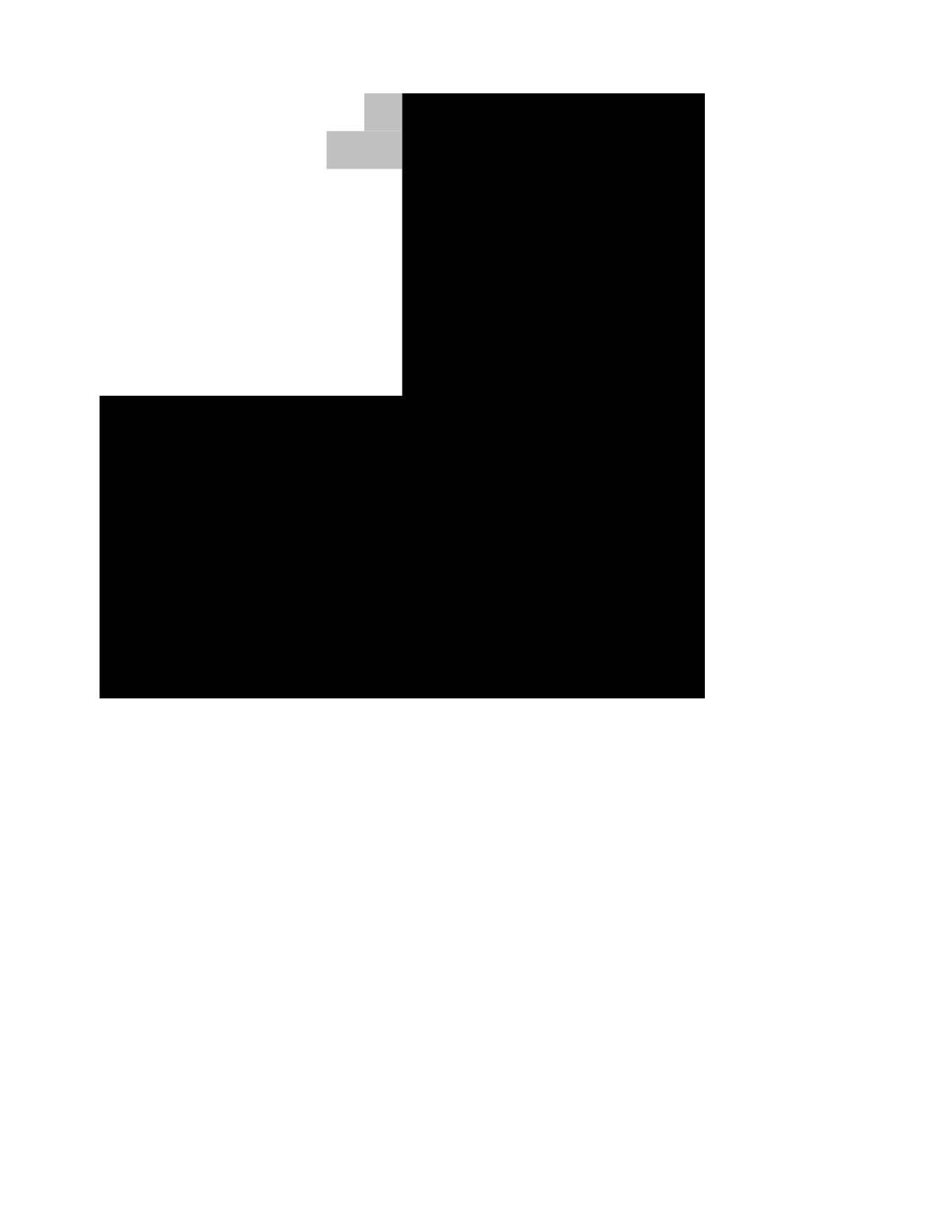} 
\caption{The Hilbert spacefilling curve is a continuous function $h$ from $[0,1]$ onto $\overline{Q}$.  Two approximating curves  are shown (left and middle).  The right image shows $h[0, 3/4]$ in black and $h[3/4, 3/4+\epsilon]$ for $\epsilon = 3/64$ in grey.
}\label{Hilbert}
\end{figure}

Before getting to the proof, let us pause for a moment to further discuss the notion of spacefilling curves.  
A {\it spacefilling curve} in $\overline{Q}$ is a continuous function $\phi: [0,T] \to \overline{Q}$ so that $\phi[0,T] = \overline{Q}$.
Since the image of the path fills up the entire square, a spacefilling curve is best visualized by an approximation.  For an example, see Figure \ref{Hilbert} which shows two curves that approximate the Hilbert spacefilling curve.  
We can view a spacefilling curve as a way to grow $\overline{Q}$ in a continuous fashion.  The continuity implies that we can always make the diameter of  $\phi[t_0, t_0+\epsilon]$ small by taking $\epsilon$ small enough. 
Note that $\lambda$ defined by \eqref{lambda} does not arise from a spacefilling curve, since we cannot make the diameter of $\lambda_{t_0+\epsilon} \setminus \lambda_{t_0}$ small by taking $\epsilon$ small.

Next we wish to briefly explain why $\lambda$  is in $X$, which is the closure of $\cup \mathcal{A}_n$ with respect to the distance $d$ in  (\ref{distance}).
Create a tree $\gamma$, so that when we remove the edges along the bottom and right sides, then all the remaining branches looks like staircases (i.e. they alternate between horizontal and vertical edges.)  
Note that by Lemma \ref {lem:fairtree-modgrid}, $\gamma \in \Gamma_n$, and so the associated Peano path will be in  $\mathcal{A}_n$.
Then $\lambda$ will be the limit of these ``staircase" Peano paths under the distance $d$.
In fact, we will see shortly that fair Peano paths are close to $\lambda$ with high probability under $\mu_n$.

As a first step, we analyze the spanning trees in $\Gamma_n$ and prove a result which roughly says that with high probability, the branches of the trees in $\Gamma_n$ are close to being ``diagonal".
Fix $n \in \mathbb{N}$ and let $m$ be the integer part of $n^{1/4}$.  
Let  $z_1, \cdots, z_m \in Q \cap \{x+iy \, : \, x = 1/n \text{ or } y = 1-1/n \}$ be spaced out as evenly as possible along the left and top vertices of the lattice $\mathcal{L}_n$.
We assume these points are ordered clockwise.  
See Figure \ref{BetaAndL}.
For $\zeta \in \mathcal{A}_n$, let $\gamma_\zeta$ be the corresponding tree in $\Gamma_n$, and define the set 
 $\beta = \beta(\zeta) \subset \gamma_\zeta$ to be the the union of paths that start at $z_j$ for $j \in \{1, 2, \cdots, m\}$ and take steps either to the right or down following $\gamma_\zeta$ until reaching the bottom or right side of $Q$ 
 (i.e. $\beta$ is a collection of branches of $\gamma_\zeta$).  
Let $L$ be the union of the line segments $l_j =\{ x+iy \in \overline{Q} \, : \, x + y = \text{Re}(z_j) + \text{Im}(z_j) \}$ for $j \in \{1, 2, \cdots, m\}$.
Note that the distance between $l_j$ and $l_{j+1}$ is roughly $\sqrt{2}/m$ with an error bounded by $2\sqrt{2}/n$, and hence it will be  larger than $n^{-1/4}$ but less than $2n^{-1/4}$.

 \begin{figure}
 \centering
\begin{tikzpicture}
\begin{axis}[
xmin=-0.05,xmax=1.1,ymin=0, ymax=1.1,x=2in,y=2in, axis x line*=bottom, axis y line*=left,
axis line style={draw opacity=0}
]
\pgfplotsset{ticks=none}
\addplot[domain=-0.02:0.33] {0.33-x};
\addplot[domain=0:0.35, samples=50,smooth,red] {0.33+0.02*sin(deg(50*x))-x};
\addplot[domain=-0.02:0.67]{0.67-x};
\addplot[domain=0:0.7, samples=50,smooth,red] {0.67-0.022*sin(deg(50*x))-x};
\addplot[domain=-0.02:1]{1-x};
\addplot[domain=0:1, samples=50,smooth,red] {1-0.02*sin(deg(50*x))-x};
\addplot[domain=0.31:1]{1.33-x};
\addplot[domain=0.33:1, samples=50,smooth,red] {1.33+0.02*sin(deg(50*(x-0.33)))-x};
\addplot[domain=0.65:1]{1.67-x};
\addplot[domain=0.67:1, samples=50,smooth,red] {1.67-0.02*sin(deg(50*(x-0.67)))-x};
  \addplot[soldot] coordinates{(0,0.33)};
   \addplot[soldot] coordinates{(0,0.67)};
    \addplot[soldot] coordinates{(0,1)};
     \addplot[soldot] coordinates{(0.33,1)};
      \addplot[soldot] coordinates{(0.67,1)};
\end{axis}
\node[left] at (0.2, 1.66) {$z_1$};
\node[left] at (0.2, 3.33) {$z_2$};
\node[left] at (0, 4.4) {$\vdots$};
\node[left] at (3.9, 5.5) {$z_m$};
\end{tikzpicture}
\caption{A sketch of  $\beta$ (in red), $L$ (in black), and $z_1, \cdots, z_m$ for $n=625, m=5$. }\label{BetaAndL}
\end{figure}
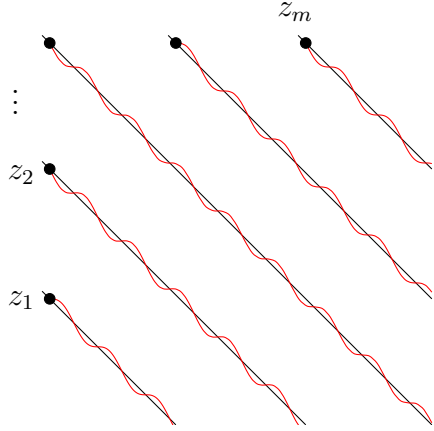

\begin{lemma} \label{lem:diagonal}
For $\beta = \beta(\zeta)$ and $L$ defined as above,
 
$$ \mu_n \left( \Big\{ \zeta \in  \mathcal{A}_n  \, : \, d_H\left(\beta(\zeta), \, L \right) < \tfrac{1}{2} n^{-1/4} \Big\} \right) \geq 1-32n^{-1/4},$$
where $d_H$ refers to Hausdorff distance.
\end{lemma}

\begin{proof}
We begin by relating $\beta$ to a simple random walk.  To that end, let $S_k$ be a 1-dimensional simple random walk with step size 1.
We can create $\beta$ from $S_k$ by chopping the walk into $m$ pieces of length $2n$, and then scaling, rotating, shifting, and truncating these pieces.  In particular, fix $j \in \{0, \cdots, m-1\}$ and 
assume that we have created $j$ paths $\alpha_1, \cdots, \alpha_{j}$.  
Then to create $\alpha_{j+1}$ we start at $z_{j+1}$ and follow the path of $\frac{\sqrt{2}}{2n}e^{-\frac{\pi}{4}i}(S_k-S_{2nj}) +z_{j+1}$ for $k \geq 2nj$ until reaching the boundary of $Q$ or until hitting $\cup_{i =1}^j\alpha_{i}$.  
Then
$\beta$ and $ \cup_{i=1}^m \alpha_i$ are equal in distribution by Lemma \ref{lem:fairtree-modgrid}.

 This connection between $\beta$ and the simple random walk implies that
 \begin{align*}
  \mu_n &\left( \Big\{ \zeta \in  \mathcal{A}_n  \, : \, d_H\left(\beta(\zeta), \, L \right) < \tfrac{1}{2} n^{-1/4} \Big\} \right) \\
       &\;\; \geq  \mathbb{P}\left( \tfrac{\sqrt{2}}{2n} |S_{2nj+i} - S_{2nj}| < \tfrac{1}{2} n^{-1/4} \text{ for } j \in \{ 0, \cdots, m-1\}, i \in \{1, \cdots, 2n \}\right). 
  \end{align*}
  Let $M_k = \max\{ |S_0|, |S_1|, \cdots, |S_k|\}$ be the running maximum of $|S_k|$.
 Then since $M_k < r$ implies that $|S_i-S_j| <2 r$ for any $i,j \in \{0, \cdots, k\}$, the equation above is bounded below by       
     $ \mathbb{P}\left(\tfrac{\sqrt{2}}{2n} M_{2mn} < \tfrac{1}{4} n^{-1/4} \right) 
      = \mathbb{P}\left( M_{2mn} < \tfrac{\sqrt{2}}{4} n^{3/4} \right)$.
Therefore,
  \begin{equation*} 
    \mu_n \left( \Big\{ \zeta \in  \mathcal{A}_n  \, : \, d_H\left(\beta(\zeta), \, L \right)      < \tfrac{1}{2} n^{-1/4} \Big\} \right)   
            \geq \mathbb{P}\left( M_{2mn} < \tfrac{\sqrt{2}}{4} n^{3/4} \right) 
            \geq 1-32n^{-1/4},
 \end{equation*}
 since $\mathbb{P}(M_k < r) \geq 1-2k/r^2$ by Lemma \ref{runningmax}.
\end{proof}

\begin{lemma} \label{closetolambda}
Let $\zeta \in  \mathcal{A}_n $ satisfy $d_H\left(\beta(\zeta), \, L \right) < \tfrac{1}{2} n^{-1/4}$, with $\beta$ and $L$ defined as above.
Then, $d(\zeta, \lambda) \leq \frac{5}{2}n^{-1/4}$,
where $d$ is the distance from \eqref{distance} and $\lambda$ is defined by \eqref{lambda}.
\end{lemma}

\begin{proof}
Let $\zeta \in  \mathcal{A}_n $ satisfy $d_H\left(\beta(\zeta), \, L \right) < \tfrac{1}{2} n^{-1/4}$.
We will prove the lemma by defining a specific parametrization for $\zeta$ that satisfies
\begin{equation}\label{sup}
\sup_{t \in [0,1]} d_H\left( \zeta[0,t], \lambda_t \right) \leq \tfrac{5}{2}n^{-1/4}.
\end{equation}
For $j = 1, \cdots, m$, set $t_j = \frac{1}{2} [ \text{Re}(z_j) + \text{Im}(z_j) ]$.
The trace of $\zeta$ will intersect the line segment $l_j$ in several points.  
Let $\zeta(t_j)$ be the point in this intersection that has the largest imaginary part (or equivalently the smallest real part).
Extend $\zeta$ to be continuously defined on $[0,1]$ with $\zeta(0) = 0$ and $\zeta(1) = 1+i$.

Let $\beta_j$ be the branch of $\gamma_\zeta$ that starts at $z_j$ and moves down and to the right until hitting the boundary of $Q$.
Since $\zeta$ cannot cross $\beta_j$, the time $t_j$ is a transition time for $\zeta$, as shown in Figure \ref{Betaj}. In particular,   $\zeta$ will not return to the region below $\beta_j$ after time $t_j$, but $\zeta$ has not yet entered the region above $\beta_j$.
The means $\zeta[0,t_j]$ is within distance $\frac{1}{4n}$ of every vertex of $\mathcal{L}_n$ that is below $\beta_j$, and it does not extend above $\beta_j$.
The triangle $\lambda_{t_j}$ is the closed region below $l_j$ in $\overline{Q}$.
Since $\beta_j$ is within distance $\frac{1}{2} n^{-1/4}$ of $l_j$ by assumption, we have that
 $d_H\left( \zeta[0,t_j], \lambda_{t_j} \right) \leq \frac{1}{2} n^{-1/4}$.

Now let $t \in (t_j, t_{j+1})$.  
Then $\zeta[0,t]$ has completed the region below $\beta_j$ but has not yet entered the region above $\beta_{j+1}$.  Further, recall that the distance between $l_j$ and $ l_{j+1}$ is bounded above by $2n^{-1/4}$. 
Thus,
 $d_H\left( \zeta[0,t], \lambda_{t} \right) \leq \frac{1}{2} n^{-1/4}+2n^{-1/4} = \frac{5}{2}n^{-1/4}$.
 A similar argument applies when $t \in [0,t_1)$ and when $t \in (t_m, 1]$, establishing \eqref{sup}.

\end{proof}

 \begin{figure}
 \centering
\begin{tikzpicture}
\begin{axis}[
xmin=-0.1,xmax=1.1,ymin=0, ymax=1.1,x=2in,y=2in, axis x line*=bottom, axis y line*=left,
axis line style={draw opacity=0}
]
\pgfplotsset{ticks=none}
\addplot[domain=0:0.7, samples=50,smooth,red] {0.67-0.022*sin(deg(50*x))-x};
\addplot[thick, black]
  coordinates{(0,0) (1,0)};
\addplot[thick, black]
  coordinates{(1,0) (1,1)};
  \addplot[soldot] coordinates{(-0.03,0.7)};
\end{axis}
\node[left] at (0.3, 3.5) {$\zeta(t_j)$};
\node[left] at (2.8, 2) {$\beta_j$};
\node[left] at (0, 1.66) {region below $\beta_j$};
\draw[->] (-1.5,1.5) to [out=-90,in=180] (1.5,1);
\node[left] at (5, 4) {region above $\beta_j$};
\end{tikzpicture}
\caption{A sketch of $\beta_j$ (in red), $\zeta(t_j)$, and the regions below and above $\beta_j$. }\label{Betaj}
\end{figure}
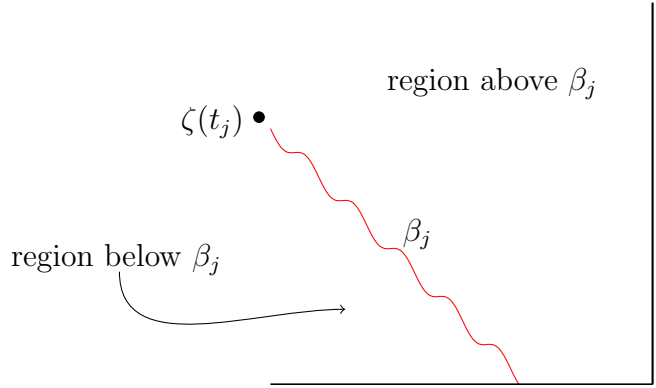

We now have the tools to prove our main result.

\begin{proof}[Proof of Theorem \ref{weakconvergence}]

Our first step is to show that the set $\{\mu_n\}$ is {\it tight}, which means that for any $\epsilon >0$ there exists a compact subset $K_\epsilon$ of $X$ so that $\mu_n(K_\epsilon) \geq 1-\epsilon$ for all $n$.  Once we show this, Prohorov's Theorem (Theorem 83.10 in \cite{rogers_williams_2000}) will imply that $\{\mu_n\}$ is conditionally compact in the space $\cP(X)$ under the topology of weak convergence.  This will show the existence of subsequential limits of $\mu_n$.

Fix $\epsilon >0$ and let $N \in \mathbb{N}$ with $32N^{-1/4} < \epsilon$.
Set $B_n = \big\{ \zeta \in  \mathcal{A}_n  \, : \, d_H\left(\beta(\zeta), \, L_n \right) < \tfrac{1}{2} n^{-1/4} \big\}$ and define
$$K_\epsilon = \left( \bigcup_{n=1}^{N-1} \mathcal{A}_n  \right) \cup \left( \bigcup_{n=N}^\infty B_n \right) \cup \{ \lambda \}.$$
Then  $\mu_n(K_\epsilon) = 1$ for $n < N$, 
and by Lemma \ref{lem:diagonal}, $\mu_n(K_\epsilon) \geq 1- 32n^{-1/4} \geq 1-\epsilon$ for $n\geq N$.
It remains to show that $K_\epsilon$ is compact, by showing that every sequence in $K_\epsilon$ has a subsequence converging to an element of $K_\epsilon$.
A sequence in $K_\epsilon$ must satisfy one of two cases.  In the first case, there is a subsequence contained in 
$ \left( \bigcup_{n=1}^{N-1} \mathcal{A}_n  \right) \cup \left( \bigcup_{n=N}^M B_n \right) \cup \{\lambda\}$ for some $M>N$.
Since this is a finite set, there must be a constant subsequence.
In the second case, there is a subsequence $\zeta_{n_k}$ with $\zeta_{n_k} \in  \bigcup_{n=k}^\infty B_n $.
By Lemma \ref{closetolambda}, $d(\zeta_{n_k}, \lambda) \leq \frac{5}{2} k^{-1/4}$, 
 which shows that $\zeta_{n_k}$ converges to $\lambda \in K_\epsilon$.
 This completes our first step.

Let $\mu$ be the limit of a subsequence $\mu_{n_k}$ under weak convergence.  To finish the proof, we must show that $\mu = \delta_\lambda$.
Set 
$$\tilde{B}_M = \left( \bigcup_{n=M}^\infty B_n \right) \cup \{\lambda\},$$
and note that $\tilde{B}_M $ is a closed set by the reasoning above.
Then by weak convergence and  Lemma \ref{lem:diagonal}
$$\mu(\tilde B_M) \geq \limsup_{k \to \infty} \mu_{n_k}(\tilde B_M) \geq \lim_{k \to \infty} 1-32 n_k^{-1/4}=1,$$
implying that $\mu(\tilde B_M) =1$.  
Since $\tilde B_M$ is a decreasing sequence of sets with limit $\{\lambda \}$, we have that
$\displaystyle \mu(\{\lambda\}) = \lim_{M\to \infty} \mu (\tilde B_M) = 1.$
This shows that $\mu = \delta_\lambda$, completing the proof.
\end{proof}

 \begin{figure}[h!]
 \centering
\includegraphics[scale=0.3]{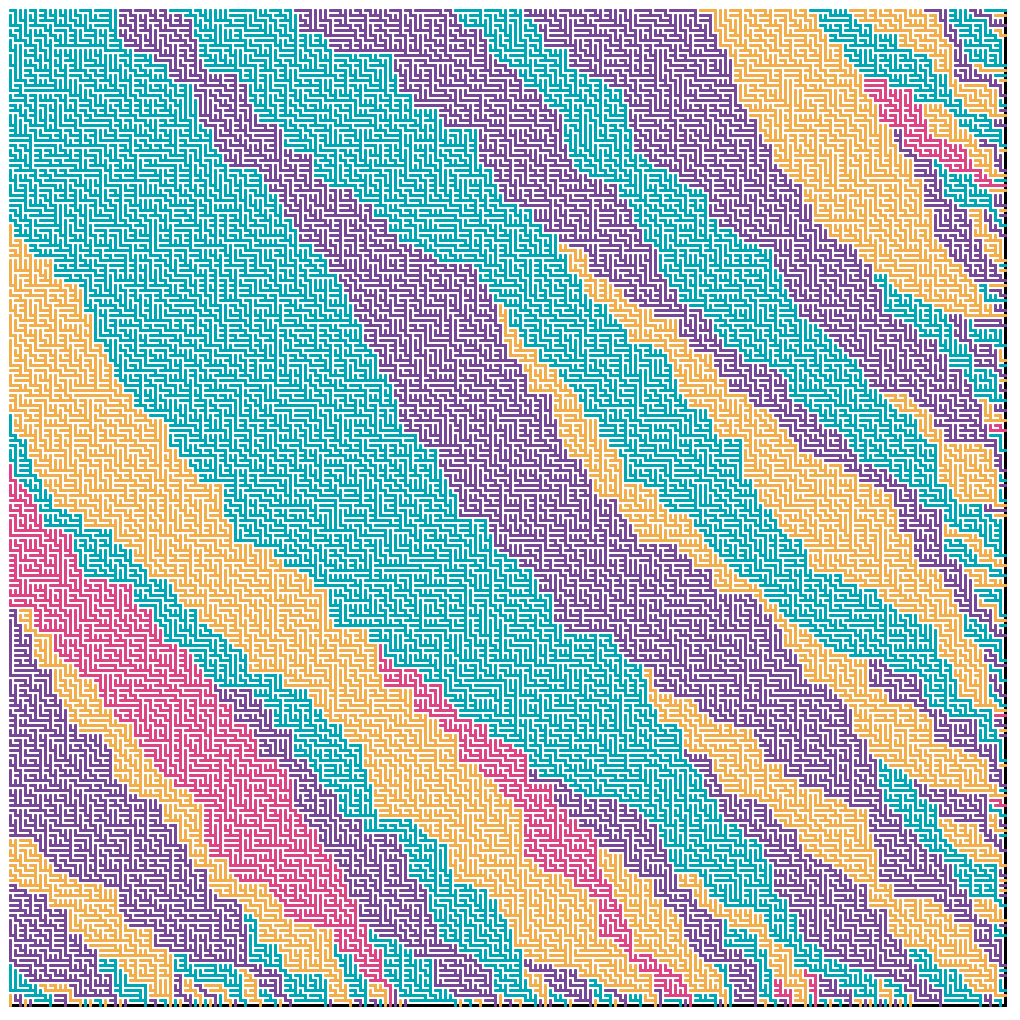}
\caption{A sample of a fair tree when $n=200$, with coloring to show the diagonal nature of the tree branches for large $n$.  Image created by David White.  }\label{fairtree200}
\end{figure}

See Figure \ref{fairtree200} for a simulation of a fair tree with $n=200$.  Each edge in the tree has a unique closest vertex in the bottom/right side of $Q$.  Edges that share the same closest vertex are colored the same color.  This coloring  highlights the diagonal nature of fair trees.

\section{Fair trees for modified grids and for standard grids}\label{sec:fairtrees-grids}
In this section we give more details about the FEU problem and then prove Lemma \ref{lem:fairtree-modgrid}. Also, we show that standard grids might be better suited for this kind of scaling limit.

\subsection{The $1$-densest subgraph}\label{sec:densestsg}
Define the measure of $1$-{\it density} for a graph $G$ as:
\[
\theta(G):=\frac{|E|}{|V|-1}.
\]
Hence, by (\ref{eq:edge-prob}), when $G$ is homogeneous, the edge probabilities  are all equal to $\theta(G)^{-1}$.
Let $\cH(G)$ be the family of all  subgraphs $H\subset G$ such that $H$ is:
\bi
\item[(i)]  non-trivial, meaning that it has at least one edge;
\item[(ii)] connected;
\item[(iii)] vertex-induced.
\ei
Note that we will often identify a subset of edges in $E(G)$ with the corresponding edge-induced subgraph of $G$.
Consider the following {\it $1$-densest subgraph problem}:
Find $H\in\cH(G)$ such that 
\begin{equation}\label{eq:dsp}
\theta(H)=\max_{H'\in\cH(G)}\theta(H').
\end{equation}
In \cite{achpcst}, it was shown that the spanning tree modulus algorithm can provide a solution to this combinatorial problem. Namely, the modulus algorithm computes the optimal density $\eta^*$ in (\ref{prob:feu}). Next, if $H$ is a connected component of the set of edges in $E$ where the minimum value of $\eta^*$ is attained, then $H\in\cH(G)$, $H$ is homogeneous by construction and  $H$ solves the $1$-densest subgraph problem.  Such $H$ is called a {\it homogeneous core} of $G$ 

\subsection{Reducibility and the deflation process}
A proper subgraph $H\in\cH(G)$ has the {\it restriction property} with respect to $G$, if for every fair tree $\ga\in\Ga_G^f$, the restriction $\ga\cap H$ is a spanning tree in $\Ga_H$.
\begin{definition}\label{def:irreducible}
The graph $G$ is said to be {\it reducible}, if it admits a proper subgraph $H\in\cH(G)$ with the restriction property with respect to $G$. A graph that is not reducible is called {\it irreducible}. 
\end{definition}
When the homogeneous core $H$ defined in Section \ref{sec:densestsg} is a proper subgraph of $G$ (and hence $G$ is {\it inhomogeneous}), then $H$ also has the restriction property, see \cite[Theorem 5.2]{achpcst}. Moreover, by \cite[Theorem 5.7]{achpcst}, in this case, not only will the restriction of a fair tree $\ga\in\Ga_G$ to $H$ be a spanning tree of $H$, but it will also be a fair tree of $H$. Secondly, the forest $\ga\setminus H$ is mapped to a fair tree of the quotient graph $G/H$ by the quotient map. And finally, given a fair tree for $H$ and a fair tree for $G/H$, they can be glued together to yield a fair tree of $G$.
This is the essence of the deflation process, namely that arbitrary fair trees of $G$ can be built from fair trees of homogeneous graphs.
We summarize and rephrase the main results of \cite{achpcst} as follow.
\begin{theorem}[\cite{achpcst}]\label{thm:feu-paper}
Let $G=(V,E)$ be a graph  with multi-edges, but no loops. Then, either 
\bi
\item[(i)] $G$ is homogeneous, and $\theta(H)\le\theta(G)$ for all $H\in\cH(G)$; or 
\item[(ii)] $G$ is inhomogeneous, and hence admits a homogeneous {\rm core} $H_0\in\cH(G)$, $H_0\subsetneq G$, that solves the $1$-densest subgraph problem (\ref{eq:dsp}) with $\theta(H_0)>\theta(G)$, and that  has the restriction property with respect to $G$. 

Moreover, for any fair tree $\ga$ of $G$, the projection of $\ga\setminus H_0$ onto $G/H_0$ is a fair spanning tree of $G/H_0$. And, fair trees of the core $H_0$ can be coupled with fair trees of the quotient graph $G/H_0$ to produce fair trees of $G$.
\ei
\end{theorem}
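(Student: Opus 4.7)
The plan is to break Theorem~\ref{thm:feu-paper} into two independent pieces—the density bound (i) and the deflation structure (ii)—and to derive each from the basic expectation/counting identity implied by (\ref{eq:peredgeeffres})--(\ref{eq:edge-prob}), together with the LP duality underlying the Fairest Edge Usage problem.

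For part (i), let $\mu^*\in\cP(\Ga_G)$ be a fair pmf. By (\ref{eq:edge-prob}), $\bP_{\mu^*}(e\in\underline{\ga})\equiv\theta(G)^{-1}$ for every $e\in E$. Given any $H\in\cH(G)$, linearity of expectation gives
\[
\bE_{\mu^*}\bigl[|\underline{\ga}\cap E(H)|\bigr]=\frac{|E(H)|}{\theta(G)}.
\]
On the other hand, for every spanning tree $\ga$ of $G$, the intersection $\ga\cap E(H)$ is a forest on the vertex set $V(H)$, so it has at most $|V(H)|-1$ edges. Combining these yields $|E(H)|/\theta(G)\le |V(H)|-1$, i.e.\ $\theta(H)\le\theta(G)$, which is (i).

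For part (ii), I would first invoke the LP formulation and Fulkerson duality from \cite{achpcst}: the optimal vector $\eta^*$ of edge-probabilities for (\ref{prob:feu}) is unique and is computable by the spanning tree modulus algorithm. Define $H_0$ to be a connected component (viewed as a vertex-induced subgraph) of the edge set on which $\eta^*$ attains its minimum value. One shows, as in Section~\ref{sec:densestsg}, that $H_0$ is homogeneous with common edge-probability $\theta(H_0)^{-1}$ and that $H_0$ solves the $1$-densest subgraph problem (\ref{eq:dsp}); the strict inequality $\theta(H_0)>\theta(G)$ then follows because $G$ is inhomogeneous, so the minimum of $\eta^*$ is strictly less than its average value $\theta(G)^{-1}=(|V|-1)/|E|$. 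The restriction property is obtained by repeating the expectation/counting argument of part (i) on $H_0$: here $\bE_{\mu^*}[|\underline{\ga}\cap E(H_0)|]=|E(H_0)|/\theta(H_0)=|V(H_0)|-1$ while each realization contributes at most $|V(H_0)|-1$, so equality in expectation forces $\underline{\ga}\cap E(H_0)$ to be a spanning tree of $H_0$ almost surely.

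For the projection and coupling assertions, I would contract $H_0$ to a single vertex to form the quotient $G/H_0$; the map $\ga\mapsto\ga\setminus E(H_0)$ then sends spanning trees of $G$ to spanning trees of $G/H_0$ (using the restriction property so that the piece inside $H_0$ is already a spanning tree), and the pushforward of $\mu^*$ has edge-probabilities equal to the restriction of $\eta^*$ to $E(G)\setminus E(H_0)$. To construct fair trees of $G$ from fair trees of $H_0$ and $G/H_0$, I would sample the two components independently and glue them along the identified vertex; the resulting pmf on $\Ga_G$ has edge-probabilities inherited from the two pieces. The main obstacle I anticipate is verifying fairness after deflation: one must show that the restriction of $\eta^*$ to $G/H_0$ still minimizes $\Var$ on that smaller graph, and conversely that the glued pmf on $G$ remains optimal. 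The natural route is a contradiction argument using the convexity of $\Var$: any strict improvement on the projected problem, lifted by coupling, would produce a pmf on $\Ga_G$ whose edge-probabilities have strictly smaller variance than $\eta^*$, contradicting optimality; the same convexity argument runs in the other direction, closing the loop.
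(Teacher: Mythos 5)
This statement is quoted from \cite{achpcst} and the paper supplies no proof of it, so there is no in-paper argument to compare against line by line; the closest material is the proof of Theorem \ref{thm:mincore}, which reproves the restriction property for minimal cores and otherwise cites Theorems 5.2, 5.5--5.8 of \cite{achpcst}. Measured against that, your part (i) is correct and complete: combining \eqref{eq:edge-prob} with linearity of expectation and the forest bound $|\ga\cap E(H)|\le |V(H)|-1$ is exactly the mechanism the paper itself uses (as a chain of inequalities) in the proof of Theorem \ref{thm:mincore}, and your equality-in-expectation argument for the restriction property of $H_0$ is the same argument run in reverse. One small circularity to flag: you use ``$\eta^*\equiv\theta(H_0)^{-1}$ on $H_0$'' to derive the restriction property, but that identity is itself most cleanly obtained from the closed chain $\theta(H_0)^{-1}=\min_e\eta^*(e)\le\mathrm{avg}_{e\in H_0}\eta^*(e)\le\theta(H_0)^{-1}$, which simultaneously forces the restriction property; stating it as one chain (as the paper does) avoids having to import the homogeneity of $H_0$ as a separate black box.

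The genuinely incomplete piece is the projection/coupling claim, which you candidly leave as a sketch. The convexity-of-variance idea does work, but the missing step is the \emph{decoupling} of the objective rather than a lifting-by-contradiction argument: once part (a) guarantees that every fair pmf is supported on trees restricting to spanning trees of $H_0$, the edge-probability vector of any such pmf splits into a part on $E(H_0)$ determined by the pushforward to $\Ga_{H_0}$ and a part on $E\setminus E(H_0)$ determined by the pushforward to $\Ga_{G/H_0}$; since $\sum_{e\in H_0}\eta(e)=|V(H_0)|-1$ and $\sum_{e\notin H_0}\eta(e)=|V(G/H_0)|-1$ are fixed, minimizing $\Var(\eta)$ over $G$ is equivalent to minimizing $\sum_e\eta(e)^2$ separately on each block, i.e.\ to solving FEU on $H_0$ and on $G/H_0$ independently. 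With that observation both directions (projection of a fair tree is fair, and independent gluing of fair pieces is fair) follow at once, and an arbitrary ``strict improvement lifts to a strict improvement'' argument is not needed. As written, your proposal identifies the right obstacle but does not yet close it.
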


\subsection{Strict denseness}
We say $H\in\cH(G)$ solves the {\it strict denseness problem}, if $H$ is a $1$-densest subgraph
and no proper subgraph is as dense as $H$. In other words, we look for a solution of the $1$-densest subgraph problem that has the following property:
\begin{definition}\label{def:strictd}
A graph $H$ is {\it strictly dense}, if
\[
\theta(H')<\theta(H)
\]
for all proper subgraphs $H'\in\cH(H)$.
\end{definition}
When $H\in\cH(G)$ solves the strict denseness problem, we say that $H$ is a {\it minimal core} for $G$.
Here, we reproduce \cite[Theorem 8.1]{alpc}, which shows that the deflation process described in Theorem \ref{thm:feu-paper} can also be phrased in terms of minimal cores.
\begin{theorem}[\cite{alpc}]\label{thm:mincore}
Let $G=(V,E)$ be a biconnected graph, possibly  with multi-edges, but no selfloops. Then, either 
\bi
\item[(i)] $G$ is strictly dense, or 
\item[(ii)] $G$ admits a  minimal core $H_0\subsetneq G$. Moreover, in this case, 
\bi
\item[(a)] $H_0$ has the {\rm fair restriction property} with respect to $G$, meaning that for every fair tree $\ga\in\Ga_G^f$, the restriction $\ga\cap H_0$ is a spanning tree in $H_0$;
\item[(b)] for any fair tree $\ga$ of $G$, the projection of $\ga\setminus H_0$ onto the shrunk graph $G/H_0$ is a fair spanning tree of $G/H_0$; 
\item[(c)] fair trees of the core $H_0$ can be coupled with fair trees of the shrunk graph $G/H_0$ to produce fair trees of $G$.
\ei
\ei
\end{theorem}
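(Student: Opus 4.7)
The plan is a proof by induction on $|E(G)|$ with Theorem \ref{thm:feu-paper} as the main engine. The case (i) assertion that strict $1$-density forces homogeneity is immediate: if $G$ had strict $1$-density but were inhomogeneous, Theorem \ref{thm:feu-paper}(ii) would produce a proper $H\in\cH(G)$ with $\theta(H)>\theta(G)$, contradicting strict $1$-density. So I may focus on case (ii). Set $M:=\max_{H\in\cH(G)}\theta(H)$ and choose a proper $H_0\subsetneq G$ in $\cH(G)$ with $\theta(H_0)=M$ and $|E(H_0)|$ minimal; such $H_0$ exists because $G$ lacks strict $1$-density. If some proper $H''\in\cH(H_0)$ had $\theta(H'')\ge M$, then $H''$ would be a proper $1$-densest subgraph of $G$ with fewer edges than $H_0$, contradicting the choice of $H_0$. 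Thus $H_0$ has strict $1$-density and so, by the previous observation applied to $H_0$, is homogeneous.

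For properties (a)--(d) I would split case (ii) into two subcases. When $G$ is inhomogeneous, Theorem \ref{thm:feu-paper}(ii) furnishes a proper homogeneous core $H_1\subsetneq G$ satisfying (a)--(d) with respect to $G$ and with $\theta(H_1)=M$. I would then argue that $E(H_0)$ lies in the $\eta^*$-minimum set, so that connectedness of $H_0$ places it inside a single connected component of that set, which we may take to be $H_1$; hence $H_0\subseteq H_1$. Since $|E(H_1)|<|E(G)|$, induction applies to $H_1$ and returns $H_0$ (or any other minimal core of $H_1$) together with its (a)--(d) inside $H_1$. Composing the deflation at $H_1$ in $G$ with the deflation at $H_0$ in $H_1$ then yields the deflation at $H_0$ in $G$: restrictions of a fair tree of $G$ become fair trees of $H_1$ and then of $H_0$, giving (a) and (b); the projection $\ga\setminus H_0$ onto $G/H_0$ decomposes at the contracted vertex into $\ga\setminus H_1\subset G/H_1=(G/H_0)/(H_1/H_0)$, which is fair by Theorem \ref{thm:feu-paper}(ii), and $(\ga\cap H_1)\setminus H_0\subset H_1/H_0\subset G/H_0$, which is fair by induction, giving (c); and (d) is obtained by reversing this composition and gluing independent samples two stages at a time.

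In the remaining subcase, $G$ homogeneous without strict $1$-density, no induction is needed: here $\theta(G)=\theta(H_0)=M$ and the unique fair edge probability is the constant $1/M$, so $\sum_{e\in E(H_0)}\eta^*(e)=|E(H_0)|/M=|V(H_0)|-1$. Combined with the a.s.\ upper bound $|\ga\cap E(H_0)|\le |V(H_0)|-1$ (valid for any spanning tree), this forces $\ga\cap E(H_0)$ to be a spanning tree of $H_0$ almost surely with constant edge probability $1/M=1/\theta(H_0)$, giving (a) and (b). A short arithmetic check using $\theta(G)=\theta(H_0)$ shows $\theta(G/H_0)=M$, so the pushforward of $\mu_G$ to $G/H_0$ has constant edge probability $1/M=1/\theta(G/H_0)$, hence $G/H_0$ is homogeneous and the pushforward fair, proving (c); (d) is then realised by the product of independent fair samples on $H_0$ and on $G/H_0$. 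The main obstacle I anticipate is the compatibility step in the inhomogeneous subcase: showing that a glue of fair pmfs on $H_1/H_0\subset G/H_0$ and on $(G/H_0)/(H_1/H_0)=G/H_1$ yields a fair pmf on $G/H_0$. Although $H_1/H_0$ need not itself be a homogeneous core of $G/H_0$, its edge probabilities sit at the extremal values dictated by the two-stage minimality, and the verification reduces to running Theorem \ref{thm:feu-paper}(ii) once more inside $G/H_0$ at an appropriate core containing $H_1/H_0$.
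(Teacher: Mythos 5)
Your handling of case (i), and of the existence and homogeneity of a minimal core $H_0$, matches the paper's. Where you diverge is in how you obtain (a)--(d), and that is where the gap sits. The paper proves (a) in one stroke, uniformly in whether $G$ is homogeneous or not, by an averaging argument: letting $H$ be the homogeneous core of Theorem \ref{thm:feu-paper} and $\mu^*$ any fair pmf,
\[
\theta(H_0)^{-1}=\theta(H)^{-1}=\min_{e\in G}\eta^*(e)\le\min_{e\in H_0}\eta^*(e)\le \frac{1}{|E(H_0)|}\sum_{e\in H_0}\eta^*(e)=\frac{\bE_{\mu^*}|\underline{\ga}\cap H_0|}{|E(H_0)|}\le\frac{|V(H_0)|-1}{|E(H_0)|}=\theta(H_0)^{-1},
\]
where the last inequality is strict if some fair tree fails to restrict to a spanning tree of $H_0$ --- a contradiction. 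This is precisely the computation you would need anyway to justify your unproven assertion that ``$E(H_0)$ lies in the $\eta^*$-minimum set'' (hence $H_0\subseteq H_1$); but once you run it, the forced equality $\bE_{\mu^*}|\underline{\ga}\cap H_0|=|V(H_0)|-1$ already delivers the restriction property (a) directly, making the induction on $|E(G)|$ and the two-stage deflation unnecessary. The paper then gets (b), (c), (d) for free by noting that Theorem 5.5, Lemma 5.6 and Theorem 5.7 of \cite{achpcst} depend only on $H_0$ having the restriction property, not on $H_0$ being the homogeneous core.

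The genuine gap in your route is the step you yourself flag as the main obstacle: composing the deflation at $H_1$ in $G$ with the deflation at $H_0$ in $H_1$. Knowing that a spanning tree of $G/H_0$ restricts to a fair tree of $H_1/H_0$ and projects to a fair tree of $(G/H_0)/(H_1/H_0)=G/H_1$ does not by itself make it fair in $G/H_0$, and conversely gluing fair samples two stages at a time requires a restriction/gluing theorem for the pair $(H_1/H_0,\,G/H_0)$. Theorem \ref{thm:feu-paper}(ii) does not supply one: it speaks only of the homogeneous core of $G/H_0$, which you have not shown to coincide with (or contain) $H_1/H_0$, and establishing that $H_1/H_0$ has the restriction property with respect to $G/H_0$ would require rerunning the averaging argument above inside $G/H_0$ --- at which point you are reproducing the paper's proof rather than avoiding it. By contrast, your homogeneous subcase is correct as written; it is exactly the constant-$\eta^*$ specialization of the paper's computation.
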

Note that, if $G$ satisfies Theorem \ref{thm:mincore} (ii), then $G$ may or may not be homogeneous.
However, if $G$ is homogeneous and not strictly dense, then it must admit a proper subgraph with the same density.

\subsection{Irreducibility, the all fair property, and strict homogeneity}
Finally, we recall the following definition from \cite{alpc}.
\begin{definition}\label{def:strictly-homogeneous}
A graph $H$ is {\it strictly homogeneous}, if there is a set of edge-weights $\si\in \R_{>0}^{E(H)}$ so that the corresponding $\UST_\si$ pmf $\mu_\si$ is fair. 
\end{definition}
The results in \cite{alpc} show that the fair trees of an arbitrary graph $G$ can be built from the fair trees of strictly homogeneous graphs. In particular, one can use the Aldous-Broder or the Wilson algorithms to sample them. We summarize the results from \cite{alpc} in the next theorem.
\begin{theorem}[\cite{alpc}]\label{thm:main}
Let $G=(V,E)$ be a graph, possibly  with multi-edges, but no loops. Assume $G$ is biconnected. Then, the following are equivalent:
\bi
\item[(a)] $G$ is strictly homogeneous (see Definition \ref{def:strictly-homogeneous});
\item[(b)] $G$ is irreducible (see Definition \ref{def:irreducible});
\item[(c)] $G$ is strictly dense (see Definition \ref{def:strictd});
\item[(d)] $G$ has the all fair property (see Definition \ref{def:allfair}).
\ei
Moreover, in this case the weights $\si$ are unique and the corresponding $\UST_\si$ pmf $\mu_\si$ is the fair pmf of maximal entropy among all the fair pmf's.
\end{theorem}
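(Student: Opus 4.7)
The plan is to establish the four-way equivalence via the cycle (a)$\Rightarrow$(d)$\Rightarrow$(b)$\Rightarrow$(c)$\Rightarrow$(d)$\Rightarrow$(a), closing through the hardest step (c)$\Rightarrow$(a) in two stages, and then reading off the \emph{moreover} claim from the construction.

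The three short implications come first. For (a)$\Rightarrow$(d): a $\si$-\textrm{WUST} pmf $\mu_\si$ with $\si\in\R_{>0}^{E}$ has $\mu_\si(\ga)>0$ for every spanning tree, so if $\mu_\si$ is fair every tree lies in the support of a fair pmf and hence is fair. For (b)$\Rightarrow$(c): this is the contrapositive of Theorem \ref{thm:mincore}(ii)(a), since failure of strict $1$-density produces a proper minimal core with the restriction property, witnessing reducibility. For (d)$\Rightarrow$(b): by contrapositive, suppose $H\in\cH(G)$ is a proper subgraph with the restriction property. Using biconnectivity of $G$ I would construct a spanning tree $\ga\in\Ga_G$ whose restriction $\ga\cap H$ is a proper spanning forest of $H$: if $V(H)=V(G)$ then swapping an edge $e\notin E(H)$ into a spanning tree of $H$ and deleting one edge of the created cycle works; if $V(H)\subsetneq V(G)$, biconnectivity forces at least two edges from $V(H)$ to $V(G)\setminus V(H)$ with distinct $V(H)$-endpoints (else removing the unique exit vertex would disconnect $G$), and attaching the outside piece via two such entry points disconnects $\ga\cap H$. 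By the restriction property this $\ga$ is not fair, contradicting (d).

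The main obstacle is (c)$\Rightarrow$(a), split as (c)$\Rightarrow$(d) then (d)$\Rightarrow$(a). For (c)$\Rightarrow$(d), assume $G$ is strictly $1$-dense. Theorem \ref{thm:mincore}(i) gives $G$ homogeneous, and strict $1$-density means $G$ admits no proper minimal core. Iterating the deflation structure of Theorems \ref{thm:feu-paper} and \ref{thm:mincore}, the fair trees of $G$ are precisely those whose restriction to each core in the iterated decomposition is a fair spanning tree of that core; with no proper cores the only constraint is ``be a spanning tree of $G$,'' so every $\ga\in\Ga_G$ is fair. The heart of the matter is then (d)$\Rightarrow$(a). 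Since every tree is fair, for each $\ga\in\Ga_G$ pick a fair pmf $\mu_\ga$ with $\mu_\ga(\ga)>0$; the average $\bar\mu:=|\Ga_G|^{-1}\sum_\ga \mu_\ga$ is fair (the set of FEU optima is convex) and has full support. I would then consider the entropy maximization
\[
\max_{\mu\in\cP(\Ga_G)} \Bigl\{-\sum_{\ga\in\Ga_G}\mu(\ga)\log\mu(\ga)\Bigr\}
\quad\text{subject to}\quad
\sum_{\ga\ni e}\mu(\ga)=p \ \ \forall e\in E,
\]
with $p:=\theta(G)^{-1}$. Strict concavity yields a unique maximizer $\mu^*$, and because the derivative of $-x\log x$ blows up at $0^+$ while $\bar\mu$ provides an interior feasible point, $\mu^*$ must lie in the relative interior of the feasible polytope and therefore have full support $\Ga_G$. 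The KKT stationarity condition then produces Lagrange multipliers $\{\la_e\}_{e\in E}$ and a constant so that $\mu^*(\ga)\propto\prod_{e\in\ga}\si(e)$ with $\si(e):=e^{-\la_e}>0$, whence $\mu^*=\mu_\si$ is a fair $\si$-\textrm{WUST}. The technically delicate step is the support analysis: the blow-up of $-\log x$ at the origin is what converts ``every tree is fair'' into ``$\mu^*$ hits every tree,'' ruling out degenerate Lagrangian behavior.

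The \emph{moreover} claims are immediate byproducts: strict concavity of Shannon entropy makes $\mu^*$ the unique fair pmf of maximal entropy, and the product form then determines $\si$ uniquely up to a global positive scalar, which does not change $\mu_\si$.
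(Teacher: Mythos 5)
A point of context first: the paper does not prove Theorem \ref{thm:main} at all --- it is quoted from \cite{alpc} --- so there is no in-paper proof to compare against. Judging your argument on its own terms, using only the definitions and the auxiliary results the paper does establish (Theorems \ref{thm:feu-paper} and \ref{thm:mincore}): most of the cycle is sound. (a)$\Rightarrow$(d) is exactly the paper's own remark that a fair $\si$-WUST has full support. (b)$\Rightarrow$(c) correctly reads Theorem \ref{thm:mincore} in the contrapositive. (d)$\Rightarrow$(b) has the right idea, with two small repairs needed: for a \emph{vertex-induced} proper subgraph the case $V(H)=V(G)$ cannot occur, and the two boundary edges with distinct $V(H)$-endpoints must be taken incident to a \emph{single} component $C$ of $G-V(H)$ (biconnectivity gives this componentwise, since a component attached at only one vertex of $H$ would make that vertex a cut vertex); one then detours a tree path of $H$ through $C$ and extends by edges outside $E(H)$, so the restriction becomes a two-component forest. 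Your (d)$\Rightarrow$(a) via entropy maximization is correct and is evidently the mechanism behind the ``maximal entropy'' clause: convexity of the set of FEU optima plus uniqueness of $\eta^*$ gives a full-support fair $\bar\mu$, the infinite slope of $-x\log x$ at $0$ forces the unique entropy maximizer to have full support, and Lagrange stationarity for affine constraints yields the product form; sufficiency of KKT also shows any fair $\si$-WUST solves the same problem, which is what pins down $\si$ up to the harmless global scaling.

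The genuine gap is (c)$\Rightarrow$(d). When $G$ is strictly $1$-dense, Theorem \ref{thm:mincore} lands in alternative (i) and tells you only that $G$ is homogeneous, i.e.\ that \emph{some} pmf with constant edge marginals exists. There is no deflation to iterate: the decomposition machinery of Theorems \ref{thm:feu-paper} and \ref{thm:mincore} is available only when a \emph{proper} core exists, and strictly $1$-dense graphs are precisely the base case of that recursion, about which the cited results say nothing concerning \emph{which} trees are fair. Your sentence ``with no proper cores the only constraint is `be a spanning tree''' asserts the content of (c)$\Rightarrow$(d) rather than deriving it. What is actually required is that strict $1$-density forces the constant vector $\theta(G)^{-1}\ones$ into the \emph{relative interior} of the spanning tree polytope --- equivalently, that for every $\ga_0\in\Ga_G$ there is a pmf with constant marginals charging $\ga_0$. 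This is the hard implication of the whole theorem and needs its own argument (e.g.\ via the characterization of the relative interior of a matroid base polytope by strict rank inequalities, where the hypotheses $\theta(H)<\theta(G)$ supply exactly the needed strictness once arbitrary edge sets are reduced to connected vertex-induced subgraphs). Without it the cycle does not close and the equivalence is not established.
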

In the next two sections we use this theory to describe the fair trees for modified grids and for regular grids.
\subsection{Fair trees for modified planar grids}\label{sec:modgrids}
We begin by computing the fair trees for the modified grids, hence proving Lemma  \ref{lem:fairtree-modgrid}.
\begin{proof}[Proof of Lemma \ref{lem:fairtree-modgrid}]
To see that $\bar{\Gamma}\subset\Gamma^f_G$, it is enough to show that $\bar{\mu}$ is optimal for  the $\text{FEU}$ problem on $G$. Notice that, when $T\in\bar{\Ga}$,  each edge $e \in E(T)$ lies in  $T \cap E_v$ for some vertex $v\in V(G_0).$ Also, by construction, exactly half of the trees of $ \bar{\Gamma}$  use any given $e$. Therefore, the edge-usage probabilities for $\bar{\mu}$ are:
 \begin{equation*}
 \bP_{\bar{\mu}}(e \in \underline{T})= \sum_{T \in \bar{\Gamma}} \bar{\mu}(T) \mathcal{N}(T,e)=\frac{1}{|\bar{\Gamma}|}\sum_{T \in \bar{ \Gamma}} \mathcal{N}(T,e)=\frac{1}{2},
 \end{equation*}
 where $\mathcal{N}(T,e)=\mathbbm{1}_{\{e \in T\}}$ gives the usage of $e$ by $T$.
 Coincidentally, this also proves that $G$ is homogeneous. 
  
Before proving the other inclusion, we examine the reducibility of $G$. 
Note that $G$ has $mn+1$ vertices and $2mn$ edges. Therefore, it has $1$-density
\begin{equation}\label{eq:modgrid-denseness}
\theta(G)=2.
\end{equation}  
Let $H$ be the subgraph of $G$ induced by the special vertex $v_0$ and the bottom-right corner of the standard grid $G_0$. Thus, $H$ consists of two nodes connected by a double edge. We refer to such graphs as {\it digons}. In particular, $\theta(H)=\theta(G)=2$. By Theorem \ref{thm:mincore}, $G$ is not strictly dense and hence $G$ is reducible. Moreover, since $G$ is homogeneous,  Theorem \ref{thm:feu-paper} shows that every subgraph in $\cH(G)$ has $1$-density at most $2$. This shows that $H$ is a solution of the strict denseness problem for $G$ and by Theorem \ref{thm:mincore} (ii), we can deflate $G$ with respect to $H$.

If we do that, the resulting quotient graph $G/H$ still maintain the structure whereby there is a special node $v_0$ and the edge-set $E$ can be partitioned as $\cup_{v\in V\setminus\{v_0\}}E_v$ with $|E_v|=2$, as before. In particular, since the edge count decreases by $2$ and the node count decreases by $1$, $G/H$ still has $1$-density equal to $2$. Moreover, this creates additional digons and $G/H$ is again homogeneous, so these digons solve the strict denseness problem, which means that we can repeat the deflation process iteratively.  For instance, we can apply the deflation process going from right to left, row by row, until we are left with a single digon graph. Finally, observe that at each step the two edges that get contracted are exactly the pairs of edges in the sets $E_v$ for $v\in V(G)\setminus\{v_0\}$ defined above. 

Now assume that $T \in \Gamma_G\setminus \bar{\Gamma}$. Then, by definition of $\bar{\Gamma}$, and the Pigeonhole principle, there exists vertices $v^*,v_*\in V(T)$, such that $|T\cap E_{v^*}|=2$ and $|T\cap E_{v_*}|=0$. Indeed, $T$ must choose $mn$ edges from $E$, which is partitioned into $mn$ pairs of edges of the form $E_v$. 
 However, this means that in the deflation process described above, we are guaranteed to get to a point where the two edges in, say, $E_{v_*}$ connect $v_*$ to a single other node $u_*$, so as to form a subgraph $H_*$ of $1$-density two in a larger homogeneous graph $G_*$ of the same $1$-density. Let $T_*$ be the tree obtained from $T$ by removing all the pairs $E_v$ that have been involved in the deflation thus far. Then, $T_*$ does not restrict to a spanning tree of $H_*$, and thus $T$ is forbidden. 
\end{proof}

\subsection{Fair trees for regular planar grids}\label{sec:grids}

One reason the scaling limit of the Peano paths in the case of modified grids turned out to be deterministic is because the family of fair spanning trees is much smaller than the family of all spanning trees. 
To explore a setting that may have a richer collection of fair trees, we return to regular planar grids.

\begin{lemma}\label{lem:planar-grids}
 The $m$-by-$n$ planar grid $G_{m,n}$ is strictly dense.
\end{lemma}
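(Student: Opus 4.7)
My plan is to reduce the problem to an edge-count bound on vertex-induced subgraphs of the grid and then close it with a harmonic-mean comparison, using the properness of $H$ to supply the final strict inequality. Direct counting gives $|V(G_{m,n})| = (m+1)(n+1)$ and $|E(G_{m,n})| = m(n+1) + n(m+1) = 2mn + m + n$, whence $\theta(G_{m,n}) = (2mn+m+n)/(mn+m+n)$. Fix a proper $H \in \cH(G_{m,n})$, set $p = |V(H)|$, $q = |E(H)|$, and let $r,c$ denote the number of distinct row and column indices occupied by $V(H)$.

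Two structural bounds drive the argument: $q \leq 2p - r - c$ and $p \leq rc$. For the first, I would count horizontal edges row by row: a nonempty row containing $p_j$ vertices of $V(H)$ contributes at most $p_j - 1$ horizontal edges (with equality iff those vertices form a consecutive block), so summing yields at most $p - r$ horizontal edges; the symmetric column-wise argument gives at most $p - c$ vertical edges. The bound $p \leq rc$ is immediate because $V(H)$ lies inside the $r \times c$ sub-rectangle it spans.

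Substituting $q \leq 2p - r - c$ into $q(mn+m+n) < (2mn+m+n)(p-1)$ and simplifying reduces the target to
\[
(r+c-2)\, mn \;>\; (p - r - c + 1)(m+n).
\]
Using $p \leq rc$, so $p - r - c + 1 \leq (r-1)(c-1)$, it is enough to establish $mn(r+c-2) \geq (m+n)(r-1)(c-1)$. With $u = r-1 \leq n$ and $v = c-1 \leq m$, for $u,v \geq 1$ this is equivalent to $1/u + 1/v \geq 1/n + 1/m$, which follows from monotonicity of $x \mapsto 1/x$, with equality iff $(u,v) = (n,m)$. The degenerate cases $r = 1$ or $c = 1$ confine $H$ to a single row or column, giving $\theta(H) = 1 < \theta(G_{m,n})$ directly.

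The final step is to locate where the strict inequality comes from. If $(r,c) \neq (n+1, m+1)$, the harmonic step is strict and the chain closes. If $(r,c) = (n+1, m+1)$, the harmonic step is an equality, but now $rc = |V(G_{m,n})|$, so the hypothesis that $H$ is proper forces $p < rc$, refining the bound to $p - r - c + 1 \leq (r-1)(c-1) - 1$ and supplying the missing slack. The main obstacle I anticipate is precisely this bookkeeping: the harmonic inequality saturates exactly when $V(H)$ occupies every row and column of the grid, so for such ``nearly full'' subgraphs the strictness cannot come from the harmonic inequality itself and must be produced by the properness hypothesis through the strict version of $p \leq rc$.
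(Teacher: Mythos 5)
Your proof is correct, and it takes a genuinely different route from the paper's. The paper argues geometrically: it fills in the faces of $H$ to form a compact set, parametrizes the boundary curves of the complementary components, analyzes the allowed turning angles of these curves, and shows that every right turn exposes a fourth corner $t$ of a face whose addition to $V(H)$ strictly increases the $1$-density (via the Handshake-Lemma bound $2|E(H)| < 4|V(H)|$); iterating this local improvement forces $H = G_{m,n}$. You instead prove the inequality by direct counting: the row/column bound $q \le 2p - r - c$, the bounding-box bound $p \le rc$, and the harmonic comparison $1/u + 1/v \ge 1/n + 1/m$ for $u = r-1 \le n$, $v = c-1 \le m$. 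I checked the algebra: the reduction of the target to $mn(r+c-2) > (m+n)(p-r-c+1)$ is right, and your two-case tracking of where strictness comes from (the harmonic step when $H$ misses some row or column, properness when it does not) is exactly the needed bookkeeping; the degenerate case $r=1$ or $c=1$ is also handled correctly. Your argument is shorter and more elementary: it uses no topology, does not need connectivity, and needs vertex-inducedness only to conclude that a proper $H$ satisfies $|V(H)| < |V(G_{m,n})|$, while also giving an explicit quantitative density deficit. What the paper's approach buys is a constructive local-improvement picture consistent with its deflation narrative: it exhibits, for any non-grid $H$, a specific vertex whose addition increases density, which simultaneously explains why rectangular sub-grids are the extremal shapes. (A cosmetic point: your counts $|V| = (m+1)(n+1)$ and $|E| = 2mn+m+n$ follow the paper's Section 2.3 definition of the $m$-by-$n$ grid, whereas the paper's own proof of the lemma works with the $mn$-vertex convention; this is only an index shift and affects nothing.)
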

In particular, by Lemma \ref{lem:planar-grids}, Theorem \ref{thm:main}, and Definition \ref{def:strictly-homogeneous}, 
there is a unique set of edge weights $\si>0$ on $E(G_{m,n})$ so that the corresponding $\UST_\si$ pmf $\mu_\si$ is fair. This suggests a different approach to the scaling limit of fair Peano paths. Namely, first sample fair $\UST_\si$ trees from the grid $G_{m,n}$ and then condition such trees to  contain the bottom and right hand-side of the grid. This should give rise to a much richer family of spanning trees that may have a non-deterministic scaling limit. We will pursue these considerations in future work.
\begin{proof}[Proof of Lemma \ref{lem:planar-grids}]
First note that:
\[
|V(G_{m,n})|=mn\qquad\text{and}\qquad |E(G_{m,n})|=(m-1)n+m(n-1)=2mn-m-n
\]
So
\[
\theta(G_{m,n})=\frac{2mn-m-n}{mn-1}=2-\frac{m+n+2}{mn-1}
\]
In particular,
\[
\frac{\partial \theta(G_{m,n})}{\partial m}=-\frac{(mn-1)-n(m+n+2)}{(mn-1)^2}=\frac{(n+1)^2}{(mn-1)^2}>0,
\]
and by symmetry the same holds for the derivative in $n$, meaning that the $1$-density of rectangular grids is strictly increasing with the size of the grid, is always less than $2$, and in fact tends to $2$ as $m,n\rightarrow\infty$.

Now let $H$ be a proper connected vertex-induced subgraph of $G=G_{m,n}$. We first consider the smallest rectangular grid containing $H$. Namely, let
\[
i_{min}:=\min\{i:\ \exists j, (i,j)\in V(H)\}
\]
and similarly define $i_{max}, j_{min}$, and $j_{max}$. Then, define the translated grid
\[
\tilde{G}:=(i_{min},j_{min})+G_{i_{max}-i_{min},j_{max}-j_{min}}.
\]
If $H=\tilde{G}$, then $\theta(H)=\theta(\tilde{G})< \theta(G)$ and we are done. If $H$ is a proper subgraph of $\tilde{G}$, then since $\theta(\tilde{G})\le \theta(G)$, it will be enough to show that $\theta(H)<\theta(\tilde{G})$. In particular, without loss of generality we can assume that $\tilde{G}=G_{m,n}$.

As an embedded planar graph $G_{m,n}$ has $(m-1)(n-1)$ bounded faces. Whenever such a face has all four vertices in $V(H)$ we fill it in so that $H$ becomes a connected compact set $\hat{H}$ in the plane. In particular, the complement of $\hat{H}$ is an open set consisting of finitely many bounded components $\Om_j$, $j=1,\dots,k$, and one unbounded component $\Om_0$. The boundary of each one of these components can be parametrized by a curve $\Ga_j$, $j=0,\dots,k$. This can be seen by thickening $\hat{H}$ a little bit, for instance by defining
\[
\hat{H}_\ep:=\bigcup_{z\in\hat{H}}\{w\in\C: |w-z|\le \ep\}.
\]
For each $\ep$, the boundary of $\hat{H}_\ep$ consists of $C^1$-smooth Jordan curves $\Ga_{j,\ep}$, $j=0,\dots,k$. If we parametrize each $\Ga_{j,\ep}$ with its arc-length parametrization, then, as $\ep$ tends to $0$, they converge uniformly to the curves $\Ga_j$.
 
We begin by looking at the boundary of the unbounded component $\Om_0$ and think of it as being parametrized counter-clockwise. In particular, $\Ga_0$ can be taken to be piecewise linear so that the derivative $\Ga_0^\prime$ is well defined away from the nodes of $\Z^2$.
Since $\Ga_{0,\ep}$ bounds the simply connected domain $\Om_{0,\ep}\cup \{\infty\}$, by the argument principle, the change in argument for the derivative $\Ga_{0,\ep}^\prime$, as the curve completes one full loop, is $2\pi$. As $\ep$ tends to $0$, this property is inherited by $\Ga_0$, as long as the argument of the derivative $\Ga_0^\prime$ is properly defined.
At any moment when $\Ga_0$ is not at a node of $\Z^2$, the right hand of a walker traveling along with $\Ga_0$ will always be touching $\Om_0$. In particular, the only changes in the argument of $\Ga_0^\prime$ that are allowed when $\Ga_0$ passes through a node of $\Z^2$ are 
\[
0,\frac{\pi}{2},\pi,\text{ and, }-\frac{\pi}{2}.
\]
In words, either the walker goes straight, turns $90^\circ$ left, does a $180^\circ$ turn in the positive direction, or turns $90^\circ$ right. This can be verified by looking at the unbounded component $\Om_{0,\ep}$ of the thickened $\hat{H}_\ep$. Indeed, if $\Ga_0$ were to do a $180^\circ$ turn in the negative direction, then while walking along $\Ga_{0,\ep}$ the right hand would be touching the thickened neighborhood of an edge, and this neighborhood would disappear as $\ep$ tends to $0$, so that edge would not be part of the boundary of $\Om_0$, which leads to a contradiction. 

Now assume $v\in \Z^2$ is a node where $\Ga_0$ makes a right turn ($90^\circ$  in the negative direction), and let $u\in\Z^2$ be the node visited by $\Ga_0$ just before $v$, and $w\in\Z^2$ the one visited just after $v$.
Then, $u,v,w$ bound a face $f$ of $G_{m,n}$ and the fourth corner $t$ does not belong to $H$. To see why, assume by contradiction that all four corners are in $V(H)$, then the face would be contained in $\hat{H}$. But then the right hand would not be touching the unbounded component $\Om_0$ in this case.
Also, note that since $H$ is vertex-induced the two sides of $f$ incident at $t$ also do not belong to $E(H)$. 

Now, if we add $t$ to $V(H)$, to get a new vertex-induced graph $H^\prime$. Since one of the coordinates of $t$ is equal to one of the coordinates of either $u$ or $w$, we see that $H^\prime$ is still a subgraph of $G_{m,n}$. Moreover, we are guaranteed that 
\[
E(H^\prime)\ge E(H)+2.
\] 
In particular,
\[
\theta(H^\prime)= \frac{E(H^\prime)}{V(H^\prime)-1}\ge\frac{E(H)+2}{V(H)}
\]
Since $H$ is a subgraph of the grid $G_{m,n}$, we always have $\deg_H(x)\le 4$. Moreover, at least one vertex of $H$ has $\deg_H(x)<4$. Therefore, by the Handshake Lemma,
\[
2E(H)=\sum_{x\in V(H)}\deg_H(x)<4 V(H).
\]
This implies that $\theta(H')>\theta(H)$ because
\begin{align*}
 \frac{E(H)+2}{V(H)} >\frac{E(H)}{V(H)-1}
& \Longleftrightarrow (E(H)+2)(V(H)-1)> E(H)V(H)\\
& \Longleftrightarrow 2V(H)> E(H).
\end{align*}
If $H^\prime=G_{m,n}$, we are done. Otherwise, we can repeat the same argument with $H$ replaced by $H^\prime$. This process has to end, so without loss of generality, we can assume that $\Ga_0$ never makes any right turns. In particular, the argument of $\Ga_0^\prime$ can only change by $0$, $\pi/2$, or $\pi$. So since there has to be an even number of argument changes that are positive, and the sum must be $2\pi$, we either get two changes by $\pi$ or four changes by $\pi/2$. In either case, we see that $\Ga_0$ describes the boundary of  a rectangular grid, and by assumption this grid has to be $G_{m,n}$.

Finally, consider a bounded component $\Om_j$ with $j\ge 1$. Assume that the boundary of $\Om_j$ is described in the clockwise direction by a curve $\Ga_j$, constructed as $\Ga_0$ above, using the thickened set $\hat{H}_\ep$. Once again the allowed argument changes at a grid node are $0,\pi/2,\pi$, and $-\pi/2$. Also, when walking along $\Ga_j$ the right-hand touches $\Om_j$ and the total argument change must equal $-2\pi$. In particular, there must always be at least four right turns with argument change $-\pi/2$. By repeating the argument above we see that each right turn identifies a vertex in $\Om_j$ that can be added to $H$ in a way to increase the $1$-density, and therefore after finitely many steps we get that the component $\Om_j$ has been filled in. Hence, after finitely many steps, $H=G_{m,n}$. This proves the lemma.
\end{proof}

\section{Appendix}

For the convenience of the reader we include a proof of needed result about the running maximum $M_k$ of $|S_k|$ for the 1-dimensional simple random walk $S_k$.

\begin{lemma}\label{runningmax}
$\mathbb{P}(M_k \geq r) \leq 2k/r^2$ for $r >0$.
\end{lemma}

\begin{proof}
Let $r>0$ and assume that $r\in \mathbb{Z}$.
Set $M_k^+ = \max\{ S_0, S_1, \cdots, S_k\}$ and $M_k^- = \min \{ S_0, S_1, \cdots, S_k\}$.  
Then for $v \in \mathbb{Z}$,
$$ \mathbb{P}(M_k^+ \geq r \text{ and } S_k = v) = 
\begin{cases} \mathbb{P}(S_k = v) \;\;\;\;\;\;\;\;\;\; \text{ if } v\geq r\\
\mathbb{P}(S_k = 2r-v) \;\; \text{ if } v < r
\end{cases}.
$$
(Reason for second case:   The probability of going up to $r$ and then down to $v$ is the same as going up to $r$ and continuing up to $2r-v$.  In other words, think of reflecting the future of the simple random walk over the line $y=r$ at the point it first reaches height $r$.)
Now summing over $v$ gives
\begin{align*}
 \mathbb{P}(M_k^+ \geq r) &= \sum_{v=r}^\infty \mathbb{P} (S_k = v) +  \sum_{v=-\infty}^{r-1} \mathbb{P} (S_k = 2r-v) \\
   &= \mathbb{P}( S_k = r)+2 \sum_{v=r+1}^\infty \mathbb{P} (S_k = v) \\
   &= \mathbb{P}( S_k = r)+2\mathbb{P}( S_k \geq r+1) \\
   &\leq 2\mathbb{P}( S_k \geq r)\\
   &=\mathbb{P}( |S_k| \geq r).
\end{align*}
Therefore
\begin{align*}
 \mathbb{P}(M_k \geq r) &\leq  \mathbb{P}(M_k^+ \geq r) +  \mathbb{P}(M_k^- \leq -r) \\
 &= 2 \mathbb{P}(M_k^+ \geq r) \\
 &\leq 2\mathbb{P}( |S_k| \geq r) \\ 
 &\leq 2k/r^2,
\end{align*}
where the last step is from Chebyshev's Inequality.
This proves the result for integer $r$, but since $M_k$ is integer-valued, we are able to extend this to all $r>0$.
\end{proof}

\bibliographystyle{acm}
\bibliography{pmodulus}
\def\cprime{$'$}

\end{document}